\documentclass[12pt,a4paper,leqno]{amsart}

\usepackage[latin1]{inputenc}
\usepackage[T1]{fontenc}
\usepackage{amsfonts}
\usepackage{amsmath}
\usepackage{amssymb}
\usepackage{eurosym}
\usepackage{mathrsfs}
\usepackage{palatino}
\usepackage{color}
\usepackage{esint}
\usepackage{url}
\usepackage{verbatim}

\usepackage{enumerate}

\newcommand{\R}{\mathbb{R}}
\newcommand{\C}{\mathbb{C}}

\newcommand{\Z}{\mathbb{Z}}

\newcommand{\diam}{\operatorname{diam}}
\numberwithin{equation}{section}


\newcommand{\dist}[0]{\operatorname{dist}}







\newcommand{\calD}[0]{\mathcal{D}}

\newcommand{\rest}{{\lfloor}}
\newcommand{\wt}[1]{{\widetilde{#1}}}

\swapnumbers
\theoremstyle{plain}
\newtheorem{thm}[equation]{Theorem}
\newtheorem{lem}[equation]{Lemma}
\newtheorem{prop}[equation]{Proposition}
\newtheorem{cor}[equation]{Corollary}

\theoremstyle{definition}
\newtheorem{defn}[equation]{Definition}

\theoremstyle{remark}
\newtheorem{rem}[equation]{Remark}

\pagestyle{headings}

\addtolength{\hoffset}{-1.15cm}
\addtolength{\textwidth}{2.3cm}
\addtolength{\voffset}{0.45cm}
\addtolength{\textheight}{-0.9cm}

\setcounter{tocdepth}{1}

\title[Improved Cotlar's inequality]{Improved Cotlar's inequality in the context of local $Tb$ theorems}

\author{Henri Martikainen}
\address[H.M.]{Department of Mathematics and Statistics, University of Helsinki, P.O.B. 68, FI-00014 Helsinki, Finland}
\email{henri.martikainen@helsinki.fi}
\thanks{H.M. is supported by the Academy of Finland through the grant
Multiparameter dyadic harmonic analysis and probabilistic methods, and is a member of the Finnish Centre of Excellence in Analysis and Dynamics Research.}

\author{Mihalis Mourgoglou}
\address[M.M.]{Departament de Matem\`atiques, Universitat Aut\`onoma de Barcelona and Centre de Reserca Matem\` atica, Edifici C Facultat de Ci\`encies, 08193 Bellaterra (Barcelona)}
\email{mmourgoglou@crm.cat}
\thanks{Research of M.M. is supported by the ERC grant 320501 of the European Research Council (FP7/2007-2013).}

\author{Xavier Tolsa}
\address[X.T.]{ICREA and Departament de Matem\`atiques\\ Universitat Aut\`onoma de Barcelona \\ Edifici C Facultat de Ci\`encies\\
08193 Bellaterra (Barcelona)}
\email{xtolsa@mat.uab.cat}
\thanks{X.T. is supported by the ERC grant 320501 of the European Research Council (FP7/2007-2013), 2014-SGR-75 (Catalonia), MTM2013-44304-P (Spain), and by the Marie Curie ITN MAnET (FP7-607647). }

\makeatletter
\@namedef{subjclassname@2010}{%
  \textup{2010} Mathematics Subject Classification}
\makeatother

\subjclass[2010]{42B20}
\keywords{non-homogeneous analysis, Local $Tb$ theorems, square functions}

\thispagestyle{empty}
\begin{document}
\maketitle

\begin{abstract}
We prove in the context of local $Tb$ theorems with $L^p$ type testing conditions an improved version of Cotlar's inequality.
This is related to the problem of removing the so called buffer assumption of Hyt\"onen--Nazarov, which is the final barrier for the full
solution of S. Hofmann's problem. We also investigate the problem of extending the Hyt\"onen--Nazarov result to non-homogeneous measures.
We work not just with the Lebesgue measure but with measures $\mu$ in $\R^d$ satisfying $\mu(B(x,r)) \le Cr^n$, $n \in (0, d]$.
The range of exponents in the Cotlar type inequality depend on $n$. Without assuming buffer we get
the full range of exponents $p,q \in (1,2]$ for measures with $n \le 1$, and in general we get $p, q \in [2-\epsilon(n), 2]$, $\epsilon(n) > 0$.
Consequences for (non-homogeneous) local $Tb$ theorems are discussed.
\end{abstract}

\section{Introduction}
Let $\mu$ be a Radon measure on $\R^d$.
We say that a function $b_Q$ is an $L^p(\mu)$-admissible test function on a cube $Q \subset \R^d$ (with constant $B_1$), if
\begin{enumerate}
\item spt$\,b_Q \subset Q$,
\item $\mu(Q) = \int_Q b_Q\,d\mu$ and
\item $\int_Q |b_Q|^p\,d\mu \le B_1\mu(Q)$.
\end{enumerate}
A long standing problem (even for the Lebesgue measure $\mu = dx$) asks whether the $L^2$ boundedness
of a Calder\'on--Zygmund operator $T$ follows if we are given $p,q \in (1,\infty)$, and for every
cube $Q$ an $L^p(\mu)$-admissible test function $b_Q$ so that
$$
\int_Q |Tb_Q|^{q'}\,d\mu \lesssim \mu(Q)
$$
and an $L^q(\mu)$-admissible test function $p_Q$ so that
$$
\int_Q |T^*p_Q|^{p'}\,d\mu \lesssim \mu(Q).
$$

In the case that both exponents are simultaneously small, i.e. $p, q < 2$ (or even $p < 2 = q$), this is still not known in this original form.
However, Hyt\"onen--Nazarov \cite{HN} showed in the Lebesgue measure case that the $L^2$ boundedness follows
if one assumes the \emph{buffered} testing conditions
$$
\int_{2Q} |Tb_Q|^{q'}\,dx + \int_Q |T^*p_Q|^{p'}\,dx \lesssim |Q|.
$$
Notice that the estimate over $2Q$ is in fact equivalent to the same estimate over the whole space $\R^d$.
A key thing in the Lebesgue measure case is that if $1/p + 1/q \le 1$ (which includes the case $p=q=2$), then the original testing conditions automatically
imply the stronger buffered testing conditions by Hardy's inequality.
The non-homogeneous version for $p=q=2$ (without buffer) is by the first named author and Lacey \cite{LM:CZO}.

The need for the buffer assumption is related to delicate problems in passing from maximal truncations to the original operator. 
In the Hyt\"onen--Nazarov paper \cite{HN} the buffer is used in Lemma 3.2, which is a version of Cotlar's inequality
in the local $Tb$ setting (i.e. one needs to use the existence of the test functions to prove the Cotlar, not the boundeness
of the operator which one does not know). In this paper we prove a more sophisticated Cotlar's inequality (Theorem \ref{thm:Cot}),
which works in the non-homogeneous setting and (for the first time) always allows some exponents $p,q < 2$. For measures
satisfying $\mu(B(x,r)) \lesssim r$, the full range of exponents is obtained. This is our main result.

We also prove the related non-homogeneous local $Tb$ theorem with these improved exponents, which is Theorem \ref{thm:locTbop}. Here we choose to use the new strategy via the big pieces $Tb$
theorem and the good lambda method from the recent paper by the first two named authors and Vuorinen \cite{MMV}. In the Calder\'on--Zugmund realm
this technique currently requires antisymmetry.

The history of the various local $Tb$ theorems (not covered above) is extremely vast including the original one by M. Christ \cite{Ch} (with $L^{\infty}$ assumptions),
the non-homogeneous extension of this by Nazarov--Treil--Volberg \cite{NTVa} and the first one with $L^p$ testing conditions for model operators
by Auscher--Hofmann--Muscalu--Tao--Thiele \cite{AHMTT}. We also mention Auscher--Yang \cite{AY}, Auscher--Routin \cite{AR} and Hofmann \cite{Ho1}.
For a more extensive survey of the developments we refer to \cite{HN} and \cite{LM:CZO} (see also \cite{MMV}).

\section{Notation and definitions}
We say that a Radon $\mu$ on $\R^d$ is of degree $n \in (0, d]$ if for some constant $C_0 < \infty$ we have that $\mu(B(x,r)) \le C_0r^n$ for all $x \in \R^d$ and $r > 0$.

We say that $K \colon \R^d \times \R^d \setminus \{(x,y): x = y\} \to \C$ is an $n$-dimensional Calder\'on--Zygmund kernel if
for some $C < \infty$ and $\alpha \in (0,1]$ we have that
\begin{displaymath}
|K(x,y)| \le \frac{C}{|x-y|^n}, \qquad x \ne y,
\end{displaymath}
\begin{displaymath}
|K(x,y) - K(x',y)| \le C\frac{|x-x'|^{\alpha}}{|x-y|^{n+\alpha}}, \qquad |x-y| \ge 2|x-x'|,
\end{displaymath}
and
\begin{displaymath}
|K(x,y) - K(x,y')| \le C\frac{|y-y'|^{\alpha}}{|x-y|^{n+\alpha}}, \qquad |x-y| \ge 2|y-y'|.
\end{displaymath}
Given a Radon measure $\nu$ in $\R^d$, possibly complex, we define
\begin{displaymath}
T\nu(x) = \int K(x,y)\,d\nu(y), \qquad x \in \R^d \setminus \textup{spt}\,\nu.
\end{displaymath}
We also define $T\nu(x)$ as above for any $x \in \R^d$ whenever the integral on the right hand side makes sense. 
We say that $T$ is an $n$-dimensional SIO (singular integral operator) with kernel $K$. Since the integral may not always be absolutely convergent for $x \in \textup{spt}\,\nu$,
we consider the following $\epsilon$-truncated operators $T_{\epsilon}$, $\epsilon > 0$:
\begin{displaymath}
T_{\epsilon}\nu(x) = \int_{|x-y| > \epsilon} K(x,y)\,d\nu(y), \qquad x \in \R^d.
\end{displaymath}
The integral on the right hand side is absolutely convergent if, say, $|\nu|(\R^d) < \infty$.

For a positive Radon measure $\mu$ in $\R^d$ and $f \in L^1_{\textup{loc}}(\mu)$ we define
\begin{displaymath}
T_{\mu}f(x) = T(f\mu)(x), \qquad x \in \R^d \setminus \textup{spt}(f\mu),
\end{displaymath}
and
\begin{displaymath}
T_{\mu, \epsilon}f(x)  = T_{\epsilon}(f\mu)(x), \qquad x \in \R^d.
\end{displaymath}
The integral defining $T_{\mu, \epsilon}f(x)$ is absolutely convergent if for example $f \in L^p(\mu)$ for some $1 \le p < \infty$ and $\mu$ is of degree $n$.

We say that $T_{\mu}$ is bounded in $L^p(\mu)$ if the operators $T_{\mu, \epsilon}$ are bounded in $L^p(\mu)$ uniformly in $\epsilon > 0$.
Singular integral operators which are bounded in $L^2(\mu)$ are called Calder\'on--Zygmund operators (CZO). The boundedness of $T_{\mu}$
from $L^1(\mu)$ into $L^{1,\infty}(\mu)$ is defined analogously.

Let $M(\R^d)$ denote the space of finite complex Radon measures in $\R^d$
equipped with the norm of total variation $\|\nu\| = |\nu|(\R^d)$.
We say that $T$ is bounded from $M(\R^d)$ into $L^{1, \infty}(\mu)$ if there exists some constant $C < \infty$ so that for every $\nu \in M(\R^d)$ we have that
\begin{displaymath}
\sup_{\lambda > 0} \lambda\cdot \mu(\{x \in \R^d\colon\, |T_{\epsilon}\nu(x)| > \lambda\}) \le C\|\nu\|
\end{displaymath}
for all $\epsilon > 0$.

We still require the important concept of maximal truncations. If $T$ is an SIO then the maximal operator $T_*$ is defined by
\begin{displaymath}
T_{*}\nu(x) = \sup_{\epsilon > 0} |T_{\epsilon}\nu(x)|, \qquad \nu \in M(\R^d), \, x \in \R^d,
\end{displaymath}
and the $\delta$-truncated maximal operators $T_{*, \delta}$ is
\begin{displaymath}
T_{*, \delta}\nu(x) = \sup_{\epsilon > \delta} |T_{\epsilon}\nu(x)|, \qquad \nu \in M(\R^d), \, x \in \R^d.
\end{displaymath}
Like above, we also set
\begin{displaymath}
T_{\mu, *}f(x) = T_*(f\mu) \qquad \textup{and} \qquad T_{\mu, *, \delta}f(x) = T_{*,\delta}(f\mu).
\end{displaymath}

We need the following centred maximal functions with respect to balls and cubes:
\begin{displaymath}
M_{\mu}\nu(x) = \sup_{r > 0} \frac{|\nu|(B(x,r))}{\mu(B(x,r))}, \qquad M_{\mu}(f) := M_{\mu}(f\mu),
\end{displaymath}
and
\begin{displaymath}
M_{\mu}^{\mathcal{Q}}\nu(x) = \sup_{r > 0} \frac{|\nu|(Q(x,r))}{\mu(Q(x,r))}, \qquad M_{\mu}^{\mathcal{Q}}(f) := M_{\mu}^{\mathcal{Q}}(f\mu).
\end{displaymath}
The variant $M_{\mu, p}f := M_{\mu} ( |f|^p)^{1/p}$ will also be used.

A cube $Q \subset \R^d$ is said $\mu$-$(a,b)$-doubling (or just $(a,b)$-doubling if the measure $\mu$ is clear from the context) if
\begin{displaymath}
\mu(aQ) \le b\mu(Q),
\end{displaymath}
where $aQ$ is the cube concentric with $Q$ with diameter $a \diam(Q)$.
If $\mu$ is a measure of order $n$, then for $b > a^n$ we have the following result about the existence of doubling cubes.
For every $x \in \textup{spt}\,\mu$ and $c > 0$ there exist some $(a,b)$-doubling cube $Q$ centred at $x$ with $\ell(Q) \ge c$ (see Section 2.4 in \cite{ToBook}).

Given $t > 0$ we say that a cube $Q \subset \R^d$ has $t$-small boundary with respect to the measure $\mu$ if
\begin{displaymath}
\mu(\{x \in 5Q\colon\, \dist(x,\partial Q) \le \lambda\ell(Q)\}) \le t\lambda\mu(5Q)
\end{displaymath}
for every $\lambda > 0$ (here $\ell(Q)$ is the side length of $Q$). The following Lemma (Lemma 9.43 in \cite{ToBook}) is important for us (notice that it holds for general Radon measures).
\begin{lem}
Let $\mu_1$ and $\mu_2$ be two Radon measures on $\R^d$. Let $t > 0$ be some constant big enough (depending only on $d$). Then, given a cube $Q \subset \R^d$, there exists
a concentric cube $Q'$ so that $Q \subset Q' \subset 1.1Q$ which has $t$-small boundary with respect to $\mu_1$ and $\mu_2$.
\end{lem}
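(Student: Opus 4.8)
The plan is to select $Q'$ from the one-parameter family of slight enlargements of $Q$ and to pin down the right parameter by a maximal-function argument on the real line. Let $z$ be the common centre of $Q$ and all its dilates, and set $\rho(x) = \max_j|x_j - z_j|$, so that $\lambda Q = \{\rho \le \tfrac12\lambda\ell(Q)\}$ and $\partial(\lambda Q) = \{\rho = \tfrac12\lambda\ell(Q)\}$ for $\lambda > 0$. For $s \in [0, \ell(Q)/20]$ let $Q_s$ be the cube centred at $z$ with $\ell(Q_s) = \ell(Q) + 2s$; then $Q = Q_0 \subseteq Q_s \subseteq Q_{\ell(Q)/20} = 1.1Q$, and I would take $Q' = Q_{s}$ for a well-chosen $s$. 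Since the sup-norm distance from $x$ to $\partial Q_s$ equals $|\rho(x) - \tfrac12\ell(Q_s)|$ and is dominated by the Euclidean distance, one has, for every $\lambda > 0$,
\begin{displaymath}
\{x \in 5Q_s : \dist(x, \partial Q_s) \le \lambda\ell(Q_s)\}\ \subseteq\ \big\{x : |\rho(x) - \tfrac12\ell(Q_s)| \lesssim \lambda\,\ell(Q)\big\} =: \Sigma_\lambda(s).
\end{displaymath}

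The first reduction disposes of the large scales. If $\lambda$ is bounded below by a small absolute threshold $\lambda_0$, then the left-hand set lies in $5Q_s$, so its $\mu_i$-measure is at most $\mu_i(5Q_s) \le t\lambda\,\mu_i(5Q_s)$ as soon as $t \ge 1/\lambda_0$. For $\lambda < \lambda_0$ one checks directly from the display that $\Sigma_\lambda(s) \subseteq 2Q$ (the shell sits near radius $\tfrac12\ell(Q)$, far inside $2Q$); and since $5Q_s \supseteq 5Q \supseteq 2Q$, it is enough to produce a single $s \in [0,\ell(Q)/20]$ such that
\begin{displaymath}
\mu_i\big(\{x \in 2Q :\ |\rho(x) - \tfrac12\ell(Q_s)| \lesssim \lambda\,\ell(Q)\}\big)\ \le\ t\lambda\,\mu_i(5Q), \qquad i = 1,2,
\end{displaymath}
for all $0 < \lambda < \lambda_0$ (the case $\mu_i(5Q) = 0$ being trivial, since then $\mu_i(2Q) = 0$). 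Then I would push forward: let $\nu_i$ be the image on $\R$ of the restriction of $\mu_i$ to $2Q$ under the continuous map $\rho$, a finite Borel measure with $\|\nu_i\| = \mu_i(2Q)$ and $\nu_i([c-\delta,c+\delta]) = \mu_i(\{x \in 2Q : |\rho(x)-c| \le \delta\})$. Writing $r = \tfrac12\ell(Q_s)$, which sweeps out an interval $I_0 \subset \R$ of length $\ell(Q)/20$ as $s$ runs over $[0,\ell(Q)/20]$, the displayed inequality holds for all small $\lambda$ as soon as $M_{\R}\nu_i(r) \lesssim t\,\mu_i(5Q)/\ell(Q)$ for $i = 1,2$, where $M_{\R}$ denotes the centred Hardy--Littlewood maximal operator on $\R$ (one simply uses $\nu_i([r-\delta,r+\delta]) \le 2\delta\,M_\R\nu_i(r)$).

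To finish, let $E_i$ be the set of $r$ for which the maximal bound fails. By the weak $(1,1)$ inequality for $M_\R$,
\begin{displaymath}
|E_i|\ \lesssim\ \frac{\ell(Q)\,\|\nu_i\|}{t\,\mu_i(5Q)}\ =\ \frac{\ell(Q)\,\mu_i(2Q)}{t\,\mu_i(5Q)}\ \le\ \frac{\ell(Q)}{t},
\end{displaymath}
with an absolute implicit constant (using $\mu_i(2Q) \le \mu_i(5Q)$). Hence $|E_1 \cup E_2| \lesssim \ell(Q)/t < \ell(Q)/20 = |I_0|$ once $t$ is chosen large enough — depending only on $d$, and at least $1/\lambda_0$ — so that $I_0 \setminus (E_1 \cup E_2) \ne \emptyset$. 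Any $r^* \in I_0 \setminus (E_1 \cup E_2)$ then determines, via $s^* = r^* - \tfrac12\ell(Q)$, a cube $Q' = Q_{s^*}$ with $Q \subseteq Q' \subseteq 1.1Q$ that has $t$-small boundary with respect to both $\mu_1$ and $\mu_2$. The same argument, with a union bound over finitely many sets $E_i$, handles any finite family of Radon measures.

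The step requiring genuine care is the demand that the small-boundary estimate hold \emph{uniformly in $\lambda$}: a plain pigeonhole over dyadically spaced values of $\lambda$ produces a different good parameter $s$ at each scale, and the weights one must insert to recombine the scales cancel the gain. The maximal-function formulation sidesteps this, because the weak $(1,1)$ inequality controls the \emph{measure} of the set of bad parameters directly, after which one needs only that the complementary good set be non-empty; and restricting the pushed-forward measure to $2Q$ (rather than to the larger $5.5Q = \bigcup_s 5Q_s$ that appears naively) is precisely what keeps the ratio $\mu_i(2Q)/\mu_i(5Q)$ from spoiling the bound on $|E_i|$.
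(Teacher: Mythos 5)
The paper does not prove this lemma; it simply imports it as Lemma~9.43 from Tolsa's book \cite{ToBook}, so there is no in-paper argument to compare against. Your blind proof is correct and self-contained, and it is essentially the standard averaging argument from the Nazarov--Treil--Volberg/Tolsa tradition, packaged in a clean way. The reduction of the boundary-shell condition to the one-dimensional statement ``$|\rho(x) - \tfrac12\ell(Q_s)|$ is small'' via the identity $\dist_\infty(x,\partial Q_s) = |\rho(x)-\tfrac12\ell(Q_s)| \le \dist(x,\partial Q_s)$ is exactly right, and the splitting into $\lambda \ge \lambda_0$ (trivial once $t \ge 1/\lambda_0$) and $\lambda < \lambda_0$ (where the shell $\Sigma_\lambda(s)$ sits inside $2Q$) is sound. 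The decisive step you flag yourself --- getting the small-boundary bound \emph{uniformly} in $\lambda$ rather than at one dyadic scale --- is handled correctly: passing to the pushforward $\nu_i = \rho_*(\mu_i\rest 2Q)$ converts the simultaneous-in-$\lambda$ requirement into a single pointwise bound $M_{\R}\nu_i(r) \lesssim t\,\mu_i(5Q)/\ell(Q)$, and the weak $(1,1)$ inequality for the centred maximal operator on $\R$ controls the Lebesgue measure of the bad set of parameters $r$ all at once. Restricting the pushforward to $2Q$ so that $\|\nu_i\| = \mu_i(2Q) \le \mu_i(5Q)$ is the observation that makes the bound $|E_i|\lesssim \ell(Q)/t$ come out with an absolute constant, after which non-emptiness of $I_0 \setminus (E_1 \cup E_2)$ follows for $t$ large. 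Two very minor remarks: the constant $t$ you produce is actually absolute (the weak $(1,1)$ constant for $M_{\R}$ and the threshold $\lambda_0$ do not involve $d$), which is stronger than the statement requires; and as you note, the argument extends verbatim by a union bound to any finite family of Radon measures, matching the generality in \cite{ToBook}.
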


The final notation used is as follows.
We write $A \lesssim B$, if there is a constant $C>0$ so that $A \leq C B$. We may also write $A \sim B$ if $B \lesssim A \lesssim B$.
For a set $A$ we denote by $\mu \rest A$ the restriction of the measure
$\mu$ to the set $A$. \emph{All the appearing test functions are test functions with a uniform constant $B_1$} (as in the beginning of the Introduction).

\section{Cotlar's inequality}
The following is our improved version of Cotlar's inequality in the context of local $Tb$ theorems with $L^p$ type testing conditions.
Compare to the relatively simple Lemma 3.2 in \cite{HN} (this Lemma is the source of the buffer assumption in \cite{HN}).
The corollaries related to the integrability properties of the maximal truncations $T_{\mu,*}b_Q$ are discussed after the proof. 
\begin{thm}\label{thm:Cot}
Let $\mu$ be a measure of order $n$ on $\R^d$ and $T$ be an $n$-dimensional SIO. Let $b$ and $t$ be large enough constants (depending only on $d$).
Suppose $Q \subset \R^d$ is a fixed cube, $p, q \in (1,2]$ and $\delta > 0 $. We assume that there exists
an $L^p(\mu)$-admissible test function $b_Q$ in $Q$ so that
\begin{displaymath}
\int_Q |T_{\mu, \delta} b_Q|^{q'}\,d\mu \lesssim \mu(Q).
\end{displaymath}
Furthermore, we assume that
for every $(5,b)$-doubling cube $R \subset Q$ with $t$-small boundary there exists an $L^q(\mu)$-admissible test function
$p_R$ in $R$ so that
\begin{displaymath}
\left\{ \begin{array}{ll} 
\int_R |T_{\mu, \delta}^*p_R|^{p'}\,d\mu \lesssim \mu(R), & \textrm{if } \frac{1}{p}+\frac{1}{q} < 1 + \frac{1}{np}, \\
\int_{2R} |T_{\mu, \delta}^*p_R|^{p'}\,d\mu \lesssim \mu(R) & \textrm{otherwise }.
\end{array} \right.
\end{displaymath}
Then for every $\epsilon > \delta$ and $x \in (1-\tau)Q$, $\tau > 0$, we have that
\begin{displaymath}
|T_{\mu, \epsilon}b_Q(x)| \lesssim_{\tau} M_{\mu}b_Q(x) + M^{\mathcal{Q}}_{\mu, p} b_Q(x) +  M^{\mathcal{Q}}_{\mu, q'}(1_QT_{\mu, \delta}b_Q)(x).
\end{displaymath}
\end{thm}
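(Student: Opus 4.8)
\emph{Overall strategy.} The plan is to run the classical Cotlar argument — split $b_Q$ into a near part and a far part with respect to a well chosen cube $R\ni x$ of side $\sim\epsilon$, use the Hölder smoothness of $K$ to replace the far part by its $\mu$-average over $R$, and estimate that average — but with the $L^2$ boundedness of $T$ (used classically to control the near part) replaced by the local testing conditions via duality together with a Calder\'on--Zygmund decomposition. First, the trivial reductions: we may assume $x\in\supp\mu$ (otherwise $T_{\mu,\epsilon}b_Q(x)=T_{\mu,\epsilon'}b_Q(x)$ with $\epsilon'$ the radius of a $\mu$-null ball about $x$), and that $\epsilon$ is small compared with $\tau\ell(Q)$ (if $\epsilon\gtrsim_\tau\ell(Q)$, then $|K(x,y)|\le C|x-y|^{-n}$, the growth $\mu(Q(x,r))\le C_0r^n$, a finite geometric sum and $\|b_Q\|_{L^1(\mu)}\le B_1^{1/p}\mu(Q)$ give $|T_{\mu,\epsilon}b_Q(x)|\lesssim_\tau M^{\mathcal Q}_{\mu}b_Q(x)$). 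Using the quoted existence of $(5,b)$-doubling cubes together with the small-boundary lemma, fix a $(5,b)$-doubling cube $R\ni x$ with $t$-small boundary, $2R\subset Q$ and $3\epsilon\le\ell(R)\lesssim_\tau\epsilon$. Put $g=b_Q\mathbf 1_{2R}$ and $h=b_Q-g$. Since $\ell(R)\ge 3\epsilon$, for $x\in R$ and $y\notin 2R$ one has $|x-y|>\epsilon$, so
\[
T_{\mu,\epsilon}b_Q(x)=T_{\mu,\epsilon}g(x)+T(h\mu)(x).
\]

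\emph{The routine pieces.} The term $T_{\mu,\epsilon}g(x)$ only sees $b_Q$ on $2R$ at distances from $x$ lying in $(\epsilon,\diam 2R)$, i.e.\ over finitely many dyadic scales, so the size bound on $K$ and the growth of $\mu$ give $|T_{\mu,\epsilon}g(x)|\lesssim_\tau M^{\mathcal Q}_{\mu}b_Q(x)$. For $z\in R$, smoothness of $K$ in the first variable (legitimate because $|x-z|\le\diam R\lesssim\epsilon\le\tfrac12|x-y|$ for $y\in\supp h$), summed over all dyadic scales $\gtrsim\epsilon$ with the gain $2^{-k\alpha}$, gives $|T(h\mu)(x)-T(h\mu)(z)|\lesssim M^{\mathcal Q}_{\mu}b_Q(x)$. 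Averaging over $z\in R$ against $\mu$ and using that for $z\in R$, $y\notin 2R$ one has $|z-y|>\delta$, whence $T(h\mu)(z)=T_{\mu,\delta}b_Q(z)-T_{\mu,\delta}g(z)$, we obtain
\[
|T(h\mu)(x)|\lesssim M^{\mathcal Q}_{\mu}b_Q(x)+\frac1{\mu(R)}\int_R|T_{\mu,\delta}b_Q|\,d\mu+\frac1{\mu(R)}\Big|\int_R T_{\mu,\delta}g\,d\mu\Big|.
\]
Since $R\subset Q$, the middle term is at most $M^{\mathcal Q}_{\mu}(\mathbf 1_Q T_{\mu,\delta}b_Q)(x)\le M^{\mathcal Q}_{\mu,q'}(\mathbf 1_Q T_{\mu,\delta}b_Q)(x)$, and $M^{\mathcal Q}_{\mu}b_Q\le M^{\mathcal Q}_{\mu,p}b_Q$. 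So everything comes down to the last term.

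\emph{The main term and the obstacle.} It remains to bound $A:=\mu(R)^{-1}\big|\int_R T_{\mu,\delta}g\,d\mu\big|$ with $g=b_Q\mathbf 1_{2R}$. By Fubini, $\int_R T_{\mu,\delta}g\,d\mu=\int_{2R}b_Q\,T^{*}_{\mu,\delta}\mathbf 1_R\,d\mu$, which brings the adjoint — for which we \emph{do} have local test functions — into play. One splits $\mathbf 1_R=p_R+(\mathbf 1_R-p_R)$, where $p_R$ is the $L^q(\mu)$-admissible test function attached to the good cube $R$ (legitimate, as $R$ is $(5,b)$-doubling with $t$-small boundary): the $p_R$-part is paired with $b_Q$ and estimated by H\"older, $\|b_Q\|_{L^p(2R,\mu)}\lesssim M^{\mathcal Q}_{\mu,p}b_Q(x)\mu(R)^{1/p}$ (using $x\in R$, the $(5,b)$-doubling of $R$ and the growth of $\mu$) times $\|T^{*}_{\mu,\delta}p_R\|_{L^{p'}(2R,\mu)}$, the latter supplied by the testing hypothesis; the mean-zero remainder $\mathbf 1_R-p_R\in L^q(\mu)$ is handled by an iterated, $p_{R_i}$-adapted Calder\'on--Zygmund decomposition over $(5,b)$-doubling, $t$-small-boundary stopping cubes $R_i$ (available by the quoted existence results), controlling the local pieces by the testing conditions on the $p_{R_i}$ and the nonlocal ones by vanishing mean plus kernel smoothness. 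The outcome is $A\lesssim M_{\mu}b_Q(x)+M^{\mathcal Q}_{\mu,p}b_Q(x)+M^{\mathcal Q}_{\mu,q'}(\mathbf 1_Q T_{\mu,\delta}b_Q)(x)$, which together with the previous displays gives the theorem. The \textbf{main obstacle} is exactly this last estimate: running the $p_R$-adapted Calder\'on--Zygmund iteration in the non-homogeneous setting with only $L^p$/$L^q$ (not $L^\infty$) control of the test functions, and pinning down the range of exponents in which the geometric series and the uncancelled near-$\partial R_i$ integrals converge. The $t$-small boundary of the $R_i$ tames those boundary integrals, and they are summable precisely when $\frac1p+\frac1q<1+\frac1{np}$ — in particular always when $n\le1$ — in which case the hypothesis $\int_R|T^{*}_{\mu,\delta}p_R|^{p'}\,d\mu\lesssim\mu(R)$ suffices; in the complementary regime the annulus $2R\setminus R$ cannot be absorbed and one genuinely needs the buffered hypothesis $\int_{2R}|T^{*}_{\mu,\delta}p_R|^{p'}\,d\mu\lesssim\mu(R)$.
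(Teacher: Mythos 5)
You have the right geometric setup (a $(5,b)$-doubling cube $R$ of scale $\sim\epsilon$ around $x$, with $t$-small boundary), the right splitting into near/far parts, and you have correctly located where the restriction $\frac1p+\frac1q<1+\frac1{np}$ must come from, namely the annulus $2R\setminus R$. But the last step — the one you yourself call ``the main obstacle'' — is exactly the content of the theorem, and your proposed route through it is not the paper's and is not carried out. That is a genuine gap.

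The crucial move you miss is how to introduce $p_R$. You average the Cotlar identity against the uniform probability $\tfrac{d\mu}{\mu(R)}$ on $R$, then Fubini to expose $T^{*}_{\mu,\delta}\mathbf 1_R$, and then try to write $\mathbf 1_R=p_R+(\mathbf 1_R-p_R)$ and dispose of the mean-zero remainder by an iterated, $p_{R_i}$-adapted Calder\'on--Zygmund decomposition. That remainder term is essentially a miniature local $Tb$ problem in its own right, and there is no indication it would produce the clean exponent threshold. The paper instead averages the identity against the \emph{weighted} probability $\tfrac{p_R\,d\mu}{\mu(R)}$ (legitimate because $\int_R p_R\,d\mu=\mu(R)$). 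This reproduces $T_{\mu,\epsilon}b_Q(x)$ on the left, while on the right the near part $T_{\mu,\delta}(b_Q\mathbf 1_{2R})(z)$ pairs by duality directly with $p_R$, yielding
\begin{displaymath}
|T_{\mu,\epsilon}b_Q(x)|\lesssim M_\mu b_Q(x)+\frac{1}{\mu(R)}\int_R|p_R|\,|T_{\mu,\delta}b_Q|\,d\mu+\frac{1}{\mu(R)}\int_{2R}|T^{*}_{\mu,\delta}p_R|\,|b_Q|\,d\mu,
\end{displaymath}
with no $\mathbf 1_R-p_R$ remainder at all. The second term is H\"older in $(q,q')$; the third is H\"older in $(p,p')$ on $R$ (where the testing bound is available) plus the genuine annulus term $\frac{1}{\mu(R)}\int_{2R\setminus R}|T^{*}_{\mu,\delta}p_R|\,|b_Q|\,d\mu$.

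Two further ingredients, absent from your sketch, are needed to close the annulus term. First, $R$ must be chosen with $t$-small boundary \emph{with respect to both $\mu$ and the auxiliary measure $\sigma_p=|b_Q|^p\,d\mu$} — the small-boundary lemma allows two measures simultaneously, and the $\sigma_p$-smallness is essential, not cosmetic. Second, on $2R\setminus R$ (outside $\supp p_R$) one bounds $|T^{*}_{\mu,\delta}p_R(z)|\lesssim\int_R|z-y|^{-n}|p_R(y)|\,d\mu(y)$, Fubinis, dyadically decomposes near $\partial R$, and runs a weighted Cauchy--Schwarz with weights $\eta_P=\bigl(\ell(P)/\ell(R)\bigr)^u M^{\mathcal Q}_{\mu,p}b_Q(x)$. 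Summability of the resulting geometric series requires $n-\tfrac{2n}{q'}<u<\tfrac2p+\tfrac{2n}{p'}-n$, which is nonempty iff $\tfrac1p+\tfrac1q<1+\tfrac1{np}$; the small-boundary property of $R$ w.r.t.\ $\sigma_p$ is what makes the near-$\partial R$ mass decay like $2^{-k}$ and gives the exponent $\tfrac2p$. You guessed the endpoint correctly but did not supply the mechanism that produces it.
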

\begin{proof}
Fix $\tau > 0$ and $x \in (1-\tau)Q$. Fix $\epsilon_0 > \delta$. Choose the smallest $m$ such that the
ball $B(x, 2^m \epsilon_0)$ is $(5C_d, b)$-doubling (where $C_d$ is a large enough dimensional constant), and let $\epsilon = 2^m\epsilon_0$. A standard calculation shows that
\begin{displaymath}
|T_{\mu, \epsilon_0}b_Q(x)-T_{\mu, \epsilon}b_Q(x)| \lesssim M_{\mu}b_Q(x).
\end{displaymath}

Therefore, it is enough to control $T_{\mu, \epsilon}b_Q(x)$. Suppose $\epsilon > C_d\ell(Q)$. Then we have that
\begin{displaymath}
T_{\mu, \epsilon}b_Q(x) = \int_{Q \cap B(x, \epsilon)^c} K(x,y)b_Q(y)\,d\mu(y) = \int_{\emptyset} K(x,y)b_Q(y)\,d\mu(y)  = 0.
\end{displaymath}
Suppose then that $c_{\tau}\ell(Q) \le \epsilon \le C_d\ell(Q)$. Then we have that
\begin{displaymath}
|T_{\mu, \epsilon}b_Q(x)| \lesssim_{\tau} \frac{1}{\ell(Q)^n} \int_{B(x, C_d\ell(Q))} |b_Q|\,d\mu \lesssim M_{\mu}b_Q(x).
\end{displaymath}
Finally, assume that $\epsilon < c_{\tau}\ell(Q)$ for a small enough constant $c_{\tau}$ to be fixed.
Define the Radon measure $\sigma_p = |b_Q|^p\,d\mu$.
Choose a cube $R$ centred at $x$ so that it has $t$-small boundary with respect to $\mu$ and $\sigma_p$, and
\begin{displaymath}
B(x, \epsilon) \subset R \subset B(x, C_d\epsilon) \subset Q.
\end{displaymath}
The last inclusion holds if $c_{\tau}$ is fixed small enough.
Notice that
\begin{displaymath}
\mu(5R) \le \mu(B(x, 5C_d\epsilon)) \le b\mu(B(x,\epsilon)) \le b\mu(R).
\end{displaymath}
Therefore, $R \subset Q$ is also a $\mu$-$(5, b)$-doubling cube. This means that there exists a function $p_R$ like in the assumptions.

For $z \in R$ we write
\begin{displaymath}
T_{\mu, \epsilon}b_Q(x) = T_{\mu, \epsilon}b_Q(x) - T_{\mu, \delta}(b_Q1_{(2R)^c})(z) + T_{\mu, \delta}b_Q(z) - T_{\mu, \delta}(b_Q1_{2R})(z).
\end{displaymath}
For all $z \in R$ we have that
\begin{align*}
|&T_{\mu, \epsilon}b_Q(x) - T_{\mu, \delta}(b_Q1_{(2R)^c})(z)| \\
&\le \int_{(2R)^c} |K(x,y)-K(z,y)|\,|b_Q(y)|\,d\mu(y) + \int_{B(x,\epsilon)^c \cap (2R)} |K(x,y)|\,|b_Q(y)|\,d\mu(y) \\
&\lesssim \epsilon^{\alpha} \int_{B(x,\epsilon)^c} \frac{|b_Q(y)|}{|x-y|^{n+\alpha}} \,d\mu(y) + \frac{1}{\epsilon^n} \int_{B(x, 2C_d\epsilon)} |b_Q(y)|\,d\mu(y) \lesssim M_{\mu}b_Q(x).
\end{align*}
We now estimate
\begin{align*}
|T_{\mu, \epsilon}b_Q(x)| &=  \Big| \frac{1}{\mu(R)} \int_R p_R(z) T_{\mu, \epsilon}b_Q(x)\,d\mu(z) \Big| \\
&\lesssim M_{\mu}b_Q(x) +  \frac{1}{\mu(R)} \int_R |p_R|\, |T_{\mu, \delta}b_Q|\,d\mu + \frac{1}{\mu(R)} \int_{2R} |T_{\mu, \delta}^*p_R| \, |b_Q|\,d\mu.
\end{align*}
We have that
\begin{align*}
\frac{1}{\mu(R)} \int_R |p_R|\, |T_{\mu, \delta}b_Q|\,d\mu &\le \Big( \frac{1}{\mu(R)} \int_R |p_R|^q \,d\mu\Big)^{1/q} \Big( \frac{1}{\mu(R)} \int_R 1_Q|T_{\mu, \delta}b_Q|^{q'}\,d\mu \Big)^{1/q'} \\
&\lesssim M^{\mathcal{Q}}_{\mu, q'}(1_QT_{\mu, \delta}b_Q)(x).
\end{align*}

It remains to estimate
\begin{displaymath}
\frac{1}{\mu(R)} \int_{2R} |T_{\mu, \delta}^*p_R| \, |b_Q|\,d\mu.
\end{displaymath}
Under the stronger assumption $ \int_{2R} |T_{\mu, \delta}^*p_R|^{p'}\,d\mu \lesssim \mu(R)$ we can simply estimate as follows:
\begin{align*}
\frac{1}{\mu(R)} \int_{2R} |T_{\mu, \delta}^*p_R| \, |b_Q|\,d\mu &\lesssim \Big(\frac{1}{\mu(R)} \int_{2R} |T_{\mu, \delta}^*p_R|^{p'}  \Big)^{1/{p'}} \Big( \frac{1}{\mu(2R)}  \int_{2R} |b_Q|^p\,d\mu \Big)^{1/p} \\
&\lesssim M^{\mathcal{Q}}_{\mu, p} b_Q(x).
\end{align*}
In the previous argument we used that $R$ is doubling.

Assume now only the weaker estimate $\int_{R} |T_{\mu, \delta}^*p_R|^{p'}\,d\mu \lesssim \mu(R)$. Then we write
\begin{displaymath}
\frac{1}{\mu(R)} \int_{2R} |T_{\mu, \delta}^*p_R| \, |b_Q|\,d\mu = \frac{1}{\mu(R)} \int_{R} |T_{\mu, \delta}^*p_R| \, |b_Q|\,d\mu + \frac{1}{\mu(R)} \int_{2R \setminus R} |T_{\mu, \delta}^*p_R| \, |b_Q|\,d\mu.
\end{displaymath}
The first term is dominated by $M^{\mathcal{Q}}_{\mu, p} b_Q(x)$ using H\"older's inequality like above. The second
term will be handled by a more tricky small boundaries trick (recall that $R$ has $t$-small boundary with respect to the measure $\sigma_p = |b_Q|^p\,d\mu$).
Denote also $\sigma = \sigma_1$ and $\nu_R = |p_R|\,d\mu$.

We begin by estimating
\begin{align*}
\int_{2R \setminus R} |T_{\mu, \delta}^*p_R| \, |b_Q|\,d\mu &= \int_{2R \setminus R} \bigg| \int_{y\colon\,|y-z| > \delta} K(y,z)p_R(y)\,d\mu(y)\bigg|\, d\sigma(z) \\
&\lesssim \int_R \int_{2R \setminus R} \frac{d\sigma(z)}{|z-y|^n} \,d\nu_R(y).
\end{align*}
Notice that $\mu(\partial R) = 0$ since $R$ has $t$-small boundary with respect to $\mu$. Fixing $y \in \textup{int}\,R$ we estimate
\begin{align*}
\int_{2R \setminus R} \frac{d\sigma(z)}{|z-y|^n} 
&\le \sum_{j=0}^{\infty} \int_{\{z \not \in R\colon 2^{-j}\diam(R) \le |z-y| \le 2^{-j+1}\diam(R)\}} \frac{d\sigma(z)}{|z-y|^n} \\
&\lesssim \sum_{j=0}^{\infty} (2^{-j}\ell(R))^{-n} \sigma(B(y, 2^{-j+1}\diam(R)) \setminus R) \\
&= \sum_{j=0}^{\infty} \sum_{P \in \calD_j(R)} 1_P(y) \ell(P)^{-n} \sigma(B(y, 2^{-j+1}\diam(R)) \setminus R) \\
&\le \mathop{\sum_{P \in \calD(R)}}_{5P \cap \partial R \ne \emptyset} \frac{\sigma(5P)}{\ell(P)^n} 1_P(y).
\end{align*}
This yields
\begin{align*}
\int_{2R \setminus R} |T_{\mu, \delta}^*p_R| \, |b_Q|\,d\mu &\lesssim \mathop{\sum_{P \in \calD(R)}}_{5P \cap \partial R \ne \emptyset} \frac{\sigma(5P)\nu_R(P)}{\ell(P)^n} \\
&= \mathop{\sum_{P \in \calD(R)}}_{5P \cap \partial R \ne \emptyset} \frac{\sigma(5P)}{\ell(P)^{n/2}} \eta_P^{-1/2} \cdot \frac{\nu_R(P)}{\ell(P)^{n/2}} \eta_P^{1/2} \\
&\lesssim \sum_{P \in \calD(R)} \frac{\nu_R(P)^2}{\ell(P)^{n}} \eta_P + \mathop{\sum_{P \in \calD(R)}}_{5P \cap \partial R \ne \emptyset} \frac{\sigma(5P)^2}{\ell(P)^{n}} \eta_P^{-1} = A + B.
\end{align*}
Here we choose
\begin{displaymath}
\eta_P = \Big(\frac{\ell(P)}{\ell(R)}\Big)^u M^{\mathcal{Q}}_{\mu, p} b_Q(x)
\end{displaymath}
for some yet to be fixed $u > 0$.

We begin by estimating the term $A$. We have
\begin{align*}
A &= M^{\mathcal{Q}}_{\mu, p} b_Q(x) \sum_{P \in \calD(R)} \frac{[ \int_P |p_R|\,d\mu ]^2 }{\ell(P)^n} \Big(\frac{\ell(P)}{\ell(R)}\Big)^u \\
&\le M^{\mathcal{Q}}_{\mu, p} b_Q(x) \sum_{P \in \calD(R)} \frac{[ \mu(P)^{1/q'} (\int_P |p_R|^q\,d\mu)^{1/q} ]^2 }{\ell(P)^n} \Big(\frac{\ell(P)}{\ell(R)}\Big)^u \\
&\lesssim M^{\mathcal{Q}}_{\mu, p} b_Q(x) \Big[ \int_R |p_R|^q\,d\mu \Big]^{2/q-1}  \sum_{P \in \calD(R)} \frac{\int_P |p_R|^q\,d\mu}{\ell(P)^{n(1-2/q')}} \Big(\frac{\ell(P)}{\ell(R)}\Big)^u \\
&\lesssim M^{\mathcal{Q}}_{\mu, p} b_Q(x) \frac{\mu(R)^{2/q-1}}{\ell(R)^{n(1-2/q')}} \int_R |p_R|^q\,d\mu  \sum_{k=0}^{\infty} 2^{k(n-2n/q'-u)} \\
&\lesssim M^{\mathcal{Q}}_{\mu, p} b_Q(x) \mu(R),
\end{align*}
provided that
\begin{equation}\label{eq:u1}
u > n - \frac{2n}{q'}.
\end{equation}

We then continue by estimating the term $B$. We have
\begin{align*}
B &= \frac{1}{M^{\mathcal{Q}}_{\mu, p} b_Q(x)}  \mathop{\sum_{P \in \calD(R)}}_{5P \cap \partial R \ne \emptyset} \frac{[\int_{5P} |b_Q|\,d\mu]^2}{\ell(P)^{n}}\Big(\frac{\ell(R)}{\ell(P)}\Big)^u \\
&\lesssim \frac{1}{M^{\mathcal{Q}}_{\mu, p} b_Q(x)}  \mathop{\sum_{P \in \calD(R)}}_{5P \cap \partial R \ne \emptyset} \frac{\sigma_p(5P)^{2/p}}{\ell(P)^{n(1-2/p')}} \Big(\frac{\ell(R)}{\ell(P)}\Big)^u \\
&= \frac{1}{M^{\mathcal{Q}}_{\mu, p} b_Q(x)} \frac{1}{\ell(R)^{n(1-2/p')}} \sum_{k=0}^{\infty} 2^{ku} 2^{kn(1-2/p')} \mathop{\sum_{P \in \calD_k(R)}}_{5P \cap \partial R \ne \emptyset} \sigma_p(5P)^{2/p}.
\end{align*}
With a fixed $k$ we estimate
\begin{align*}
\mathop{\sum_{P \in \calD_k(R)}}_{5P \cap \partial R \ne \emptyset} \sigma_p(5P)^{2/p} \le \Big( \mathop{\sum_{P \in \calD_k(R)}}_{5P \cap \partial R \ne \emptyset} \sigma_p(5P) \Big)^{2/p}
= \bigg( \int |b_Q|^p \Big[ \mathop{\sum_{P \in \calD_k(R)}}_{5P \cap \partial R \ne \emptyset} 1_{5P} \Big] \,d\mu \bigg)^{2/p}.
\end{align*}
Notice then that here
\begin{displaymath}
5P \subset \{y \in 5R\colon\, d(y, \partial R) \le C2^{-k}\ell(R) \}
\end{displaymath}
and
\begin{displaymath}
\sum_{P \in \calD_k(R)} 1_{5P} \lesssim 1.
\end{displaymath}
Using that $R$ has $t$-small boundary with respect to $\sigma_p$ we can now deduce that
\begin{displaymath}
\mathop{\sum_{P \in \calD_k(R)}}_{5P \cap \partial R \ne \emptyset} \sigma_p(5P)^{2/p} \lesssim \sigma_p( \{y \in 5R\colon\, d(y, \partial R) \le C2^{-k}\ell(R) \})^{2/p} \lesssim 2^{-2k/p}\sigma_p(5R)^{2/p}.
\end{displaymath}
Noticing that
\begin{align*}
\sigma_p(5R)^{2/p} \le \mu(5R)^{2/p} M^{\mathcal{Q}}_{\mu, p} b_Q(x)^2 \lesssim \ell(R)^{n(2/p-1)} \mu(R) M^{\mathcal{Q}}_{\mu, p} b_Q(x)^2
\end{align*}
this yields that
\begin{align*}
B \lesssim M^{\mathcal{Q}}_{\mu, p} b_Q(x) \mu(R) \sum_{k=0}^{\infty} 2^{k(-2/p+u+n-2n/p')} \lesssim M^{\mathcal{Q}}_{\mu, p} b_Q(x) \mu(R)
\end{align*}
provided that
\begin{equation}\label{eq:u2}
u < \frac{2}{p} + \frac{2n}{p'}-n.
\end{equation}
Assuming that the constant $u$ can be chosen appropriately we have proved that
\begin{displaymath}
\frac{1}{\mu(R)} \int_{2R \setminus R} |T_{\mu, \delta}^*p_R| \, |b_Q|\,d\mu \lesssim \frac{A+B}{\mu(R)} \lesssim M^{\mathcal{Q}}_{\mu, p} b_Q(x).
\end{displaymath}

We see from \eqref{eq:u1} and \eqref{eq:u2} that the constant $u$ can be chosen if
\begin{displaymath}
n - \frac{2n}{q'} < \frac{2}{p} + \frac{2n}{p'}-n.
\end{displaymath}
This amounts precisely to
\begin{displaymath}
\frac{1}{p}+\frac{1}{q} < 1 + \frac{1}{np}.
\end{displaymath}
\end{proof}
The main implication is that the maximal truncation $T_{\mu, *}b_Q$ still satisfies reasonable testing condtions.
\begin{cor}
Let $\mu$ be a measure of order $n$ on $\R^d$ and $T$ be an $n$-dimensional SIO. Let $b$ and $t$ be large enough constants (depending only on $d$).
Suppose $Q \subset \R^d$ is a fixed cube, $p, q \in (1,2]$ and $\delta > 0 $. We assume that there exists
an $L^p(\mu)$-admissible test function $b_Q$ in $Q$ so that
\begin{displaymath}
\int_Q |T_{\mu, \delta} b_Q|^{q'}\,d\mu \lesssim \mu(Q).
\end{displaymath}
Furthermore, we assume that
for every $(5,b)$-doubling cube $R \subset Q$ with $t$-small boundary there exists an $L^q(\mu)$-admissible test function
$p_R$ in $R$ so that
\begin{displaymath}
\left\{ \begin{array}{ll} 
\int_R |T_{\mu, \delta}^*p_R|^{p'}\,d\mu \lesssim \mu(R), & \textrm{if } \frac{1}{p}+\frac{1}{q} < 1 + \frac{1}{np}, \\
\int_{2R} |T_{\mu, \delta}^*p_R|^{p'}\,d\mu \lesssim \mu(R) & \textrm{otherwise }.
\end{array} \right.
\end{displaymath}
Let $\tau > 0$ and $0 < a < p$. We have that
$$
\int_{(1-\tau)Q} [T_{\mu, *, \delta} b_Q]^a\,d\mu \lesssim_{\tau, a} \mu(Q).
$$
\end{cor}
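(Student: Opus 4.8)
The plan is to read the corollary off Theorem~\ref{thm:Cot} by taking a supremum, and then to estimate the resulting three maximal functions using only that they are of weak type $(1,1)$ together with the finiteness of $\mu(Q)$.

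\textbf{Step 1 (pointwise domination).} Since $T_{\mu,*,\delta}b_Q(x)=\sup_{\epsilon>\delta}|T_{\mu,\epsilon}b_Q(x)|$ and the right-hand side in the conclusion of Theorem~\ref{thm:Cot} does not depend on $\epsilon$, taking the supremum over $\epsilon>\delta$ in that conclusion gives, for every $x\in(1-\tau)Q$,
\[
T_{\mu,*,\delta}b_Q(x)\lesssim_{\tau} M_{\mu}b_Q(x)+M^{\mathcal{Q}}_{\mu,p}b_Q(x)+M^{\mathcal{Q}}_{\mu,q'}(1_QT_{\mu,\delta}b_Q)(x).
\]
Thus it suffices to bound $\int_{(1-\tau)Q}(\cdot)^a\,d\mu$ for each of the three terms by $C_{\tau,a}\,\mu(Q)$. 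Recall that $\mu(Q)<\infty$ (as $\mu$ is of order $n$ and $Q$ is bounded) and $(1-\tau)Q\subset Q$, and that by the Besicovitch covering theorem (applied to balls, resp. to cubes) the centred maximal operators $M_{\mu}$ and $M^{\mathcal{Q}}_{\mu}$ are of weak type $(1,1)$ with respect to $\mu$ with a constant depending only on $d$; in particular they are bounded on $L^{s}(\mu)$ for all $s\in(1,\infty]$.

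\textbf{Step 2 (the first term).} Since $b_Q\in L^{p}(\mu)$ with $\int|b_Q|^{p}\,d\mu\le B_1\mu(Q)$ and $p>1$, we get $\int(M_{\mu}b_Q)^{p}\,d\mu\lesssim\mu(Q)$. As $a<p$ and $\mu((1-\tau)Q)\le\mu(Q)<\infty$, H\"older's inequality yields $\int_{(1-\tau)Q}(M_{\mu}b_Q)^{a}\,d\mu\le\mu(Q)^{1-a/p}\big(\int(M_{\mu}b_Q)^{p}\,d\mu\big)^{a/p}\lesssim_{a}\mu(Q)$.

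\textbf{Step 3 (the two cube maximal functions).} Here one cannot simply invoke $L^{s}$-boundedness, since $M^{\mathcal{Q}}_{\mu,p}b_Q$ is a priori only in weak $L^{p}(\mu)$; this is the one point requiring a little care. Write $M^{\mathcal{Q}}_{\mu,p}b_Q=(M^{\mathcal{Q}}_{\mu}(|b_Q|^{p}))^{1/p}$. Since $|b_Q|^{p}\in L^{1}(\mu)$ with norm $\le B_1\mu(Q)$, the weak $(1,1)$ estimate gives $\mu(\{M^{\mathcal{Q}}_{\mu,p}b_Q>\lambda\})=\mu(\{M^{\mathcal{Q}}_{\mu}(|b_Q|^{p})>\lambda^{p}\})\lesssim\mu(Q)\lambda^{-p}$. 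Combining this with the trivial bound $\mu(\{x\in(1-\tau)Q:\,\cdot>\lambda\})\le\mu(Q)$ and splitting the layer-cake integral at $\lambda=1$,
\[
\int_{(1-\tau)Q}(M^{\mathcal{Q}}_{\mu,p}b_Q)^{a}\,d\mu=a\int_{0}^{\infty}\lambda^{a-1}\mu(\{x\in(1-\tau)Q:\,M^{\mathcal{Q}}_{\mu,p}b_Q(x)>\lambda\})\,d\lambda\lesssim a\,\mu(Q)\Big(\tfrac1a+\tfrac{1}{p-a}\Big)\lesssim_{a}\mu(Q),
\]
where we used $a<p$. The third term is treated identically: the hypothesis $\int_{Q}|T_{\mu,\delta}b_Q|^{q'}\,d\mu\lesssim\mu(Q)$ means $1_QT_{\mu,\delta}b_Q\in L^{q'}(\mu)$, so $M^{\mathcal{Q}}_{\mu,q'}(1_QT_{\mu,\delta}b_Q)=(M^{\mathcal{Q}}_{\mu}(|1_QT_{\mu,\delta}b_Q|^{q'}))^{1/q'}$ lies in weak $L^{q'}(\mu)$ with $\mu(\{\cdot>\lambda\})\lesssim\mu(Q)\lambda^{-q'}$; since $q\le2$ forces $q'\ge2\ge p>a$, the same layer-cake splitting gives $\int_{(1-\tau)Q}[M^{\mathcal{Q}}_{\mu,q'}(1_QT_{\mu,\delta}b_Q)]^{a}\,d\mu\lesssim_{a}\mu(Q)$.

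Adding the three bounds from Steps 2 and 3 proves the corollary. The only genuine subtlety is the elementary inequality $a<p\le2\le q'$: it is exactly what guarantees that the weak-type information on the two cube maximal functions can be integrated to the power $a$ over a set of finite $\mu$-measure; the rest is a direct consequence of Theorem~\ref{thm:Cot} and the Besicovitch weak $(1,1)$ estimates for non-homogeneous measures.
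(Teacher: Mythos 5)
Your proof is correct and follows essentially the same route as the paper: pointwise domination from Theorem~\ref{thm:Cot}, then H\"older plus $L^p$-boundedness of $M_\mu$ for the first term, and weak $(1,1)$ of $M^{\mathcal{Q}}_\mu$ for the other two. The only cosmetic difference is that you derive the Kolmogorov-type bound by a layer-cake computation split at $\lambda=1$, whereas the paper invokes the standard inequality $\int_Q|f|^a\,d\mu\le\frac{s}{s-a}\mu(Q)^{1-a/s}\|f\|_{L^{s,\infty}(\mu)}^a$ directly; both hinge on the same observation $a<p\le 2\le q'$.
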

\begin{proof}
Using Theorem \ref{thm:Cot} we see that
\begin{align*}
\int_{(1-\tau)Q} [T_{\mu, *, \delta} b_Q]^a\,d\mu &\lesssim_{\tau} \int_Q [M_{\mu}b_Q]^a\,d\mu + \int_Q [M^{\mathcal{Q}}_{\mu, p} b_Q]^a\,d\mu + \int_Q [M^{\mathcal{Q}}_{\mu, q'}(1_QT_{\mu, \delta}b_Q)]^a\,d\mu \\
&= I + II + III.
\end{align*}
Notice that $I \lesssim \mu(Q)$, which can be seen by using H\"older's inequality with the exponent $p/a > 1$ and the $L^p(\mu)$ boundedness of $M_{\mu}$.

For the remaining terms $II$ and $III$ we shall use the inequality
\begin{equation}\label{eq:weak}
\int_Q |f|^a\,d\mu \le \frac{s}{s-a} \mu(Q)^{1-a/s} \|f\|_{L^{s, \infty}(\mu)}^a, \qquad s > a.
\end{equation}
Using \eqref{eq:weak} with $s = p > a$ we see that
\begin{align*}
II \lesssim_a \mu(Q)^{1-a/p} \|M^{\mathcal{Q}}_{\mu, p} b_Q\|_{L^{p, \infty}(\mu)}^a &= \mu(Q)^{1-a/p} \| M^{\mathcal{Q}}_{\mu}(|b_Q|^p) \|_{L^{1,\infty}(\mu)}^{a/p} \\
& \lesssim \mu(Q)^{1-a/p} \|b_Q\|_{L^p(\mu)}^a \lesssim \mu(Q),
\end{align*}
where we used that $M^{\mathcal{Q}}_{\mu}$ maps $L^1(\mu) \to L^{1,\infty}(\mu)$ boundedly.

Similarly, using \eqref{eq:weak} with $s = q' \ge 2 > a$ we see that
\begin{align*}
III \lesssim_a \mu(Q)^{1-a/q'} \| M^{\mathcal{Q}}_{\mu}(1_Q |T_{\mu, \delta} b_Q|^{q'})\|_{L^{1,\infty}(\mu)}^{a/q'} \lesssim \mu(Q)^{1-a/q'} \|1_Q T_{\mu, \delta} b_Q\|_{L^{q'}(\mu)}^a \lesssim \mu(Q).
\end{align*}
This ends the proof.
\end{proof}
In the following corollary we record the fully symmetric statement.
\begin{cor}\label{cor:main}
Let $\mu$ be a measure of order $n$ on $\R^d$ and $T$ be an $n$-dimensional SIO. Let $b$ and $t$ be large enough constants (depending only on $d$),
and $p, q \in (1,2]$. For every $(5,b)$-doubling cube $Q \subset \R^d$ with $t$-small boundary we assume that there exist
an $L^p(\mu)$-admissible test function $b_Q$ in $Q$ so that
\begin{displaymath}
\left\{ \begin{array}{ll} 
\sup_{\delta > 0} \int_Q |T_{\mu, \delta} b_Q|^{q'}\,d\mu \lesssim \mu(Q), & \textrm{if } \frac{1}{p}+\frac{1}{q} < 1 + \frac{1}{nq}, \\
\sup_{\delta > 0} \int_{2Q} |T_{\mu, \delta} b_Q|^{q'}\,d\mu \lesssim \mu(Q) & \textrm{otherwise },
\end{array} \right.
\end{displaymath}
and an $L^q(\mu)$-admissible test function $p_Q$ in $Q$ so that
\begin{displaymath}
\left\{ \begin{array}{ll} 
\sup_{\delta > 0} \int_Q |T_{\mu, \delta}^*p_Q|^{p'}\,d\mu \lesssim \mu(Q), & \textrm{if } \frac{1}{p}+\frac{1}{q} < 1 + \frac{1}{np}, \\
\sup_{\delta > 0} \int_{2Q} |T_{\mu, \delta}^*p_Q|^{p'}\,d\mu \lesssim \mu(Q) & \textrm{otherwise }.
\end{array} \right.
\end{displaymath}
Let $\tau > 0$.
Then for every $(5,b)$-doubling cube $Q \subset \R^d$ with $t$-small boundary we have
$$
\int_{(1-\tau)Q} [T_{\mu, *} b_Q]^a\,d\mu \lesssim_{\tau, a} \mu(Q), \qquad 0 < a < p,
$$
and
$$
\int_{(1-\tau)Q} [T^*_{\mu, *} p_Q]^a\,d\mu \lesssim_{\tau, a} \mu(Q), \qquad 0 < a < q.
$$
\end{cor}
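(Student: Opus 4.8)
The plan is to derive both estimates from the unnumbered corollary preceding this one (the one established right after the proof of Theorem~\ref{thm:Cot}), applied once to $T$ and once to the adjoint $T^*$, and then to remove the truncation parameter by a monotone limit.

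\emph{The estimate for $T_{\mu,*}b_Q$.} I would fix a $(5,b)$-doubling cube $Q$ with $t$-small boundary, fix $\tau>0$ and an arbitrary $\delta>0$, and verify the hypotheses of that preceding corollary for this $Q$ and this $\delta$. The test function $b_Q$ supplied by the hypothesis of Corollary~\ref{cor:main} does not depend on $\delta$, and it satisfies $\int_Q|T_{\mu,\delta}b_Q|^{q'}\,d\mu\lesssim\mu(Q)$ in every case: in the first alternative this is exactly the assumption, and in the second alternative it follows since $Q\subset 2Q$ and the integrand is nonnegative. Next, every $(5,b)$-doubling cube $R\subset Q$ with $t$-small boundary is itself a cube to which the hypothesis of Corollary~\ref{cor:main} applies (that hypothesis is stated for all such cubes of $\R^d$), so it comes with an $L^q(\mu)$-admissible $p_R$ in $R$ satisfying $\int_R|T_{\mu,\delta}^*p_R|^{p'}\,d\mu\lesssim\mu(R)$ when $\tfrac1p+\tfrac1q<1+\tfrac1{np}$ and $\int_{2R}|T_{\mu,\delta}^*p_R|^{p'}\,d\mu\lesssim\mu(R)$ otherwise --- precisely the dichotomous condition on the inner test functions demanded by the preceding corollary. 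That corollary then yields $\int_{(1-\tau)Q}[T_{\mu,*,\delta}b_Q]^a\,d\mu\lesssim_{\tau,a}\mu(Q)$ for every $0<a<p$, with implicit constant independent of $\delta$. Since $T_{\mu,*,\delta}b_Q$ increases pointwise to $T_{\mu,*}b_Q$ as $\delta\downarrow0$, the monotone convergence theorem gives $\int_{(1-\tau)Q}[T_{\mu,*}b_Q]^a\,d\mu\lesssim_{\tau,a}\mu(Q)$.

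\emph{The estimate for $T^*_{\mu,*}p_Q$.} For this I would run the same argument with $T$ replaced by its formal adjoint $T^*$, an $n$-dimensional SIO with kernel $K^*(x,y)=K(y,x)$ (which satisfies the same Calder\'on--Zygmund bounds as $K$, with the two variables interchanged), and with the roles of $p$ and $q$ --- hence of $p'$ and $q'$ --- swapped throughout. Now $p_Q$ plays the part of the ``outer'' test function: it is $L^q(\mu)$-admissible in $Q$ and satisfies $\int_Q|T_{\mu,\delta}^*p_Q|^{p'}\,d\mu\lesssim\mu(Q)$ (again the $2Q$-version of the hypothesis forces the $Q$-version). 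For every $(5,b)$-doubling $R\subset Q$ with $t$-small boundary, the hypothesis of Corollary~\ref{cor:main} applied to $R$ furnishes an $L^p(\mu)$-admissible $b_R$ with $\int_R|T_{\mu,\delta}b_R|^{q'}\,d\mu\lesssim\mu(R)$ when $\tfrac1p+\tfrac1q<1+\tfrac1{nq}$ and $\int_{2R}|T_{\mu,\delta}b_R|^{q'}\,d\mu\lesssim\mu(R)$ otherwise; since $(T^*)^*=T$, since $q'$ is conjugate to the first exponent $q$ of the current $(q,p)$-configuration, and since $1+\tfrac1{nq}$ is the threshold the preceding corollary attaches to that first exponent, this is exactly the dichotomous condition demanded by that corollary applied to $T^*$. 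Hence $\int_{(1-\tau)Q}[T^*_{\mu,*,\delta}p_Q]^a\,d\mu\lesssim_{\tau,a}\mu(Q)$ for $0<a<q$ uniformly in $\delta$, and letting $\delta\downarrow0$ once more finishes the proof.

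The part that requires genuine care --- the ``hard part'' --- is the bookkeeping of the thresholds: one must confirm that the two thresholds in the hypothesis of Corollary~\ref{cor:main} ($1+\tfrac1{nq}$ governing $b_Q$ and $1+\tfrac1{np}$ governing $p_Q$) line up correctly with the single threshold $\tfrac1p+\tfrac1q<1+\tfrac1{np}$ of the preceding corollary after the $p\leftrightarrow q$ interchange used for $T^*$. The remaining ingredients --- reducing the $2Q$-type hypotheses to their $Q$-type consequences, observing that the test functions may be chosen independently of $\delta$, and passing to the limit $\delta\downarrow0$ --- are routine.
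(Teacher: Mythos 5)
Your proposal is correct and is exactly the argument the paper has in mind: the paper offers no explicit proof of Corollary~\ref{cor:main} but introduces it as ``the fully symmetric statement'' of the unnumbered corollary that precedes it, and your proof supplies precisely the intended symmetrization --- applying that corollary once to $T$ with $(p,q)$ as given and once to $T^*$ with $p\leftrightarrow q$ swapped, noting that the buffered hypothesis over $2Q$ subsumes the $Q$-version, and then passing $\delta\downarrow0$ by monotone convergence using the uniformity in $\delta$ of the hypotheses. Your threshold bookkeeping (that $1/p+1/q<1+\tfrac1{nq}$ for the $b_Q$-condition is the image of $1/p+1/q<1+\tfrac1{np}$ under the $p\leftrightarrow q$ swap) is exactly right.
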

\begin{rem}
Notice that if $n = 1$ the condition
$$
1/p + 1/q < 1 + 1/(np) = 1 + 1/p
$$
only says that $q > 1$ (and the symmetric condition only says that $p > 1$) yielding the full range of exponents without buffer.
In general, one can have both $p, q < 2$ simultaneously without buffer, showcasing
that $p = q = 2$ is not a threshold after which one is required to assume buffer.
\end{rem}

\section{Implications to local $Tb$ theorems}
It is easier to prove local $Tb$ theorems assuming conditions for maximal truncations $T_{\mu, *}b_Q$ rather than $T_{\mu}b_Q$. In fact, there is a tradeoff
here. One needs much weaker conditions on $T_{\mu, *}b_Q$ compared to $T_{\mu}b_Q$, but of course $T_{\mu, *}b_Q$ is a larger
object to begin with. Probably most convenient is to prove a local $Tb$ theorem assuming conditions on $T_{\mu, *}b_Q$, and then reduce the one what with operator testing to this
via Corollary \ref{cor:main}. The point of the maximal truncations is to allow suppression arguments. In our proof these suppression arguments
are hidden to the big pieces $Tb$ theorem (originally by Nazarov--Treil--Volberg \cite{NTV:Vit}) that we apply. The method of proof in \cite{HN} also
involves suppression (in a different way) and the proof is not directly applicable in the non-homogeneous situation.

We want to adapt the convenient strategy from the recent paper by the first two named authors and Vuorinen \cite{MMV}.
This new strategy via the big pieces $Tb$ theorem and non-homogeneous good lambda method is ideal in the square function
setting, since there is no duality and no maximal truncations in that context. Because the big pieces $Tb$ theorem seems to be challenging to extend to
all Calder\'on--Zygmund operators (it only currently works for antisymmetric ones), we also make the antisymmetry assumption here.

\subsection{Big pieces via maximal truncations}
The next Proposition (Proposition \ref{prop:TstarmainProp}) with testing assumptions about maximal truncations corresponds to Proposition 2.3 in \cite{MMV}.
The proof from that setting can be directly moved here, and as such one could make the assumptions as weak as in \cite{MMV}.
For the convenience of the reader we quickly reprove a less general statement here (if one is interested in as general a statement
as possible, just look at \cite{MMV}). This will be enough for deriving the local $Tb$ theorem with operator testing, which is our main focus here.
\begin{defn}\label{defn:random}
Given a cube $Q \subset \R^d$ we consider the following random dyadic grid.
For small notational convenience assume that $c_{Q} = 0$ (that is, $Q$ is centred at the origin). 
Let $N \in \Z$ be defined by the requirement $2^{N-3} \le \ell(Q) < 2^{N-2}$.
Consider the random square $Q^* = Q^*(w) = w + [-2^N,2^N)^n$, where
$w \in [-2^{N-1}, 2^{N-1})^d =: \Omega_N = \Omega$. The set $\Omega$ is equipped with the normalised Lebesgue measure $\mathbb{P}_N = \mathbb{P}$.
We define the grid $\mathcal{D}(w) := \mathcal{D}(Q^*(w))$ (the local dyadic grid generated by the cube $Q^*(w)$). Notice that
$Q \subset \alpha Q^*(w)$ for some $\alpha < 1$, and $\ell(Q) \sim \ell(Q^*(w))$.
\end{defn}

\begin{prop}\label{prop:TstarmainProp}
Let $\mu$ be a measure of order $n$ on $\R^d$ and $T$ be an $n$-dimensional SIO with a kernel $K$ satisfying $K(x,y) = -K(y,x)$.
Let $Q \subset \R^d$ be a fixed cube, $q \in (1,\infty)$ and $b_Q$ be an $L^q(\mu)$-admissible test function in $Q$ with constant $B_1$.
Then there exists a small constant $c_1 = c_1(q, B_1) > 0$ with the following property. If there exist $s > 0$ and an exceptional set $E_Q \subset \R^d$
so that $\int_{E_Q} |b_Q|\,d\mu \le c_1\int_Q |b_Q|\,d\mu$ and
\begin{equation}\label{eq:maxtrun}
\sup_{\lambda > 0} \lambda^s \mu(\{x \in Q \setminus E_Q\colon\, T_{\mu,*}b_Q(x) > \lambda\}) \le B_2\mu(Q) \textup{ for some } B_2 < \infty,
\end{equation}
then there exists $G_Q \subset Q \setminus E_Q$ so that $\mu(G_Q) \gtrsim \mu(Q)$ and
$T_{\mu\rest G_Q}\colon L^2(\mu\rest G_Q) \to L^2(\mu\rest G_Q)$ with a norm depending on the constants in the assumptions.
\end{prop}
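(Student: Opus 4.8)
The plan is to realize the ``big piece'' $G_Q$ by running a stopping time / corona decomposition inside $Q\setminus E_Q$, carving out the set where either $b_Q$ is atypical or the maximal truncation $T_{\mu,*}b_Q$ is large, and then appealing to the antisymmetric big pieces $Tb$ theorem of Nazarov--Treil--Volberg \cite{NTV:Vit} on the remaining set. First I would normalize: since $b_Q$ is $L^q(\mu)$-admissible with $\mu(Q)=\int_Q b_Q\,d\mu$, we have $\int_Q|b_Q|\,d\mu\ge\mu(Q)$ and $\int_Q|b_Q|^q\,d\mu\le B_1\mu(Q)$, so by H\"older the set $\{x\in Q:\ |b_Q(x)|>\Lambda\}$ has small measure for $\Lambda=\Lambda(q,B_1)$ large, and also the set where $M_\mu b_Q$ or $M_{\mu,q}b_Q$ exceeds a large constant has measure $\le \eps\mu(Q)$ by the weak-type bounds for $M_\mu$ (as used in the corollaries above). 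Likewise, from \eqref{eq:maxtrun} the level set $\{x\in Q\setminus E_Q:\ T_{\mu,*}b_Q(x)>\lambda_0\}$ has measure $\le B_2\lambda_0^{-s}\mu(Q)$, which is $\le\eps\mu(Q)$ once $\lambda_0$ is chosen large depending on $B_2,s$. Removing from $Q\setminus E_Q$ the union of all these bad sets (together with $E_Q$ itself, whose $\mu$-measure is controlled because $\int_{E_Q}|b_Q|\,d\mu\le c_1\int_Q|b_Q|\,d\mu$ forces $\mu(E_Q)$ small relative to $\mu(Q)$ after one more use of H\"older on $b_Q$) leaves a set $F_Q\subset Q\setminus E_Q$ with $\mu(F_Q)\ge(1-C\eps)\mu(Q)\gtrsim\mu(Q)$ on which, pointwise, $|b_Q|\le\Lambda$, $M_{\mu,q}b_Q\le\Lambda$, and $T_{\mu,*}b_Q\le\lambda_0$.

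Next I would verify the hypotheses of the antisymmetric big pieces $Tb$ theorem relative to $\mu\rest F_Q$. The key accretivity-type input is $\mathrm{Re}\int_Q b_Q\,d\mu=\mu(Q)$ together with $\int_{F_Q}|b_Q|\,d\mu \ge \int_Q|b_Q|\,d\mu - \int_{E_Q}|b_Q|\,d\mu - \int_{Q\setminus(F_Q\cup E_Q)}|b_Q|\,d\mu\gtrsim\mu(Q)$, the last estimate because the discarded part has small measure and $M_{\mu,q}b_Q$ is controlled there; thus $b_Q$ restricted to $F_Q$ is a genuine nondegenerate test function for $\mu\rest F_Q$ on $Q$, and it inherits the $L^q$ bound and the pointwise bound $\|b_Q\|_{L^\infty(F_Q)}\le\Lambda$. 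The testing hypothesis needed is a bound on $\|\mathbf 1_{F_Q} T_{\mu\rest F_Q,*}b_Q\|$, and this is where the pointwise control $T_{\mu,*}b_Q\le\lambda_0$ on $F_Q$ is used: one writes $T_{\mu\rest F_Q,\eps}b_Q = T_{\mu,\eps}b_Q - T_\eps((b_Q\mathbf 1_{Q\setminus F_Q})\mu)$, bounds the first term by $\lambda_0$ on $F_Q$, and controls the error term by $M_\mu(b_Q\mathbf 1_{Q\setminus F_Q})$ plus a maximal truncation of the small-mass measure $(b_Q\mathbf 1_{Q\setminus F_Q})\mu$, which is acceptable after a further small shrinking of $F_Q$ (discarding where that error maximal function is large, again only $\eps\mu(Q)$ worth of mass). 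One then applies \cite{NTV:Vit} to produce $G_Q\subset F_Q$ with $\mu(G_Q)\gtrsim\mu(F_Q)\gtrsim\mu(Q)$ and $T_{\mu\rest G_Q}$ bounded on $L^2(\mu\rest G_Q)$ with norm controlled by $B_1,B_2,s,q,d$ and the CZ constants of $K$; the antisymmetry $K(x,y)=-K(y,x)$ is exactly what the big pieces theorem requires, and it also makes the $b$ and $b^*$ testing symmetric so only one test function is needed.

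The main obstacle I expect is the passage from the truncated testing datum \eqref{eq:maxtrun} for $T_{\mu,*}$ (over the original measure $\mu$, on $Q\setminus E_Q$) to a genuine testing bound for the maximal truncation of $T_{\mu\rest G_Q}$ that the big pieces theorem can ingest: this requires (i) handling the difference between $T_{\mu,*}$ and $T_{\mu\rest F_Q,*}$ via the tail/error estimate sketched above, keeping the extra discarded mass at the level $\eps\mu(Q)$ so that after all the shrinkings the final set still satisfies $\mu(G_Q)\gtrsim\mu(Q)$; and (ii) making sure the $\eps$'s and the thresholds $\Lambda,\lambda_0,c_1$ are chosen in the correct order (first fix how much mass the big pieces theorem allows to lose, then choose $\eps$, then $\Lambda$ and $\lambda_0$, then $c_1$) so there is no circularity. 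Once the bookkeeping is arranged, everything else is the standard invocation of \cite{NTV:Vit}, essentially as in Proposition 2.3 of \cite{MMV}.
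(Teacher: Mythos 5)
Your plan has two genuine gaps that the paper's proof is built precisely to avoid. First, you infer that $\mu(E_Q)$ is small from $\int_{E_Q}|b_Q|\,d\mu\le c_1\int_Q|b_Q|\,d\mu$, but H\"older runs the wrong way here: $\int_{E_Q}|b_Q|\,d\mu\le\mu(E_Q)^{1/q'}\|b_Q\|_{L^q(\mu)}$ bounds the left side from above, not below, and $b_Q$ may well vanish on $E_Q$ while $\mu(E_Q)$ is comparable to $\mu(Q)$. The hypothesis therefore controls $\sigma(E_Q)$ (with $\sigma=|b_Q|\,d\mu$) but says nothing about $\mu(E_Q)$, so your entire $\mu$-measure bookkeeping starts from a false premise. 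Second, making $b_Q$ bounded by deleting $\{|b_Q|>\Lambda\}$ trades an unbounded function for an uncontrollable error: writing $T_{\mu\rest F_Q,\eps}b_Q=T_{\mu,\eps}b_Q-T_{\mu,\eps}(b_Q1_{Q\setminus F_Q})$, the second term is a maximal truncation of a measure that is only small in total variation, and your step ``discard the set where it is large, costing only $\eps\mu(Q)$'' presupposes a weak $(1,1)$ bound for $T_*$ against $\mu$ --- which is equivalent to the $L^2(\mu)$ boundedness you are trying to prove. Nor can one fall back on the positive potential $\int d|\nu|(y)/|x-y|^n$: it is not weak $(1,1)$ against an $n$-growth $\mu$ (take $\nu=\mu\rest B$, which makes it identically infinite on $B$), so there is no elementary substitute for that discard.

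The paper avoids both problems by changing the measure rather than truncating the function. Set $\sigma=|b_Q|\,d\mu$ and $\widehat b_Q=b_Q/|b_Q|$, so that $|\widehat b_Q|\equiv1$ on all of $Q$ with no domain removal, $T_\sigma\widehat b_Q=T_\mu b_Q$ so the given weak-type testing for $T_{\mu,*}b_Q$ transfers verbatim to $T_{\sigma,*}\widehat b_Q$, and the accretivity $\int_Q\widehat b_Q\,d\sigma=\mu(Q)\ge B_1^{-1/q}\sigma(Q)$ is automatic from admissibility. Every exceptional set --- the para-accretivity bad set $T_w$ built from the random grid, the set $H_2$ where $\sigma$ fails $n$-growth, and $E_Q$ itself --- is then estimated in $\sigma$-measure, where $\sigma(E_Q)\le c_1\sigma(Q)$ is exactly the hypothesis. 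The price is a transfer step at the end: the NTV big pieces theorem yields $T_\sigma$ bounded on $L^2(\sigma\rest G_Q)$ with $\sigma(G_Q)\gtrsim\sigma(Q)$, and recovering $T_{\mu\rest G_Q}$ bounded on $L^2(\mu\rest G_Q)$ together with $\mu(G_Q)\gtrsim\mu(Q)$ requires $|b_Q|\sim1$ on $G_Q$, which the paper arranges with an extra stopping time ($H_1$) removing cubes where the $L^q$-average of $b_Q$ is too large or $\sigma(R)/\mu(R)$ is too small. This transfer mechanism is exactly what your more direct $\mu$-based plan is missing, and I do not see a way to make the direct plan close without reproducing some version of it.
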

\begin{proof}
We can assume that spt$\, \mu \subset Q$. Indeed, if we have proved the theorem for such measures, we can then apply it to $\mu \rest Q$.
Let us define the measure $\sigma$ by setting $\sigma(A) = \int_A |b_Q|\,d\mu$. Also, write $b_Q = |b_Q|\widehat b_Q$
using the polar decomposition, so that $|\widehat b_Q| = 1$. The big pieces $Tb$ theorem by Nazarov--Treil--Volberg (Theorem 5.1 in \cite{ToBook})
will be applied to the measure $\sigma$ and the $L^{\infty}$-function $\widehat b_Q$. (Notice that Theorem 5.1 in \cite{ToBook}
is stated for the Cauchy operator, but holds true for all antisymmetric CZO with the same proof. Moreover, the $L^1$ testing assumption there can directly be weakened to a weak
type testing condtion.)

We fix $w$, and write $\mathcal{D}(w) = \calD$. We also write $\calD_0 = \calD(0)$.
Let $\mathcal{A} = \mathcal{A}_w$ consist of the maximal dyadic cubes $R \in \calD$ for which
\begin{displaymath}
\Big| \int_R \widehat b_Q \,d\sigma \Big| < \eta \sigma(R),
\end{displaymath}
where $\eta := \frac{1}{2}B_1^{-1/q}$.
We set
\begin{displaymath}
T = T_w = \bigcup_{R \in \mathcal{A}} R \subset \R^d.
\end{displaymath} 
Notice that
\begin{align*}
\sigma(Q) = \int_Q |b_Q|\,d\mu \le B_1^{1/q}\mu(Q) = B_1^{1/q} \int_Q b_Q\,d\mu = B_1^{1/q} \int_Q \widehat b_Q \,d\sigma.
\end{align*}
Then estimate
\begin{displaymath}
 \int_Q \widehat b_Q \,d\sigma =  \int_{Q \setminus T} \widehat b_Q \,d\sigma + \sum_{R \in \mathcal{A}} \int_R  \widehat b_Q \,d\sigma
 \le \sigma(Q \setminus T) + \eta\sigma(Q).
\end{displaymath}
Since $\eta B_1^{1/q} = 1/2$ we conclude that
\begin{displaymath}
\sigma(Q) \le B_1^{1/q} \sigma(Q \setminus T)  + \frac{1}{2}\sigma(Q),
\end{displaymath}
and so
\begin{displaymath}
\sigma(Q) \le 2B_1^{1/q} [\sigma(Q) - \sigma(T)].
\end{displaymath}
From here we can read that
\begin{displaymath}
\sigma(T) \le \tau_0 \sigma(Q), \qquad \tau_0 := 1 - \frac{1}{2B_1^{1/q}} = 1-\eta < 1.
\end{displaymath}

Next, let $\mathcal{F}$ consist of the maximal dyadic cubes $R \in \calD_0$ for which
\begin{displaymath}
\int_R |b_Q|^q\,d\mu > C_0\mu(R)
\end{displaymath}
or
\begin{displaymath}
\sigma(R) < \delta \mu(R),
\end{displaymath}
where $C_0 := [16B_1\eta^{-1}]^{q'}$ and $\delta := \eta/16$.
Let $\mathcal{F}_1$ be the collection of maximal cubes $R \in \calD_0$ satisfying the first condition, and define $\mathcal{F}_2$ analogously.
Note that
\begin{displaymath}
\mu\Big( \bigcup_{R \in \mathcal{F}_1} R \Big) \le B_1C_0^{-1}\mu(Q),
\end{displaymath}
and so
\begin{align*}
\sigma\Big( \bigcup_{R \in \mathcal{F}_1} R \Big) &= \int_{\bigcup_{R \in \mathcal{F}_1} R } |b_Q|\,d\mu \\
&\le \mu\Big( \bigcup_{R \in \mathcal{F}_1} R \Big)^{1/q'} \Big( \int_Q |b_Q|^q\,d\mu\Big)^{1/q}  \\
&\le [B_1C_0^{-1}]^{1/q'}\mu(Q)^{1/q'} \cdot B_1^{1/q}\mu(Q)^{1/q} =  \delta\mu(Q) \le \delta\sigma(Q).
\end{align*}
Finally, we record that
\begin{displaymath}
\sigma\Big( \bigcup_{R \in \mathcal{F}_2} R \Big) = \sum_{R \in \mathcal{F}_2} \sigma(R) \le \delta \sum_{R \in \mathcal{F}_2} \mu(R) = \delta\mu\Big( \bigcup_{R \in \mathcal{F}_2} R \Big)
\le \delta \mu(Q) \le \delta\sigma(Q).
\end{displaymath}
We may conclude that the set
\begin{displaymath}
H_1 = \bigcup_{R \in \mathcal{F}} R
\end{displaymath}
satisfies  $\sigma(H_1) \le 2\delta\sigma(Q) = \frac{\eta}{8}\sigma(Q)$.

We now record the important property of the exceptional set $H_1$. Let $x \in Q \setminus H_1$.
For any $R \in \calD_0$ satisfying that $x \in R$ we have that
\begin{align*}
\delta \le \frac{\sigma(R)}{\mu(R)} = \frac{1}{\mu(R)} \int_R |b_Q|\,d\mu
\le  \Big( \frac{1}{\mu(R)} \int_R |b_Q|^q\,d\mu\Big)^{1/q} \le C_0^{1/q}.
\end{align*}
Letting $\ell(R) \to 0$ we conclude that for $\mu$-a.e. $x \in Q \setminus H_1$ we have $|b_Q(x)| \sim 1$.

We need another exceptional set $H_2$. To this end, let
\begin{displaymath}
p(x) = \sup_{r > 0} \frac{\sigma(B(x,r))}{r^n} = \sup_{r > 0} \frac{1}{r^n} \int_{B(x,r)} |b_Q|\,d\mu = M^{R}_{\mu} b_Q(x).
\end{displaymath}
For $p_0 > 0$ let $E_{p_0} = \{p \ge p_0\}$. Notice that
\begin{align*}
\mu(E_{p_0}) = \mu(\{M^{R}_{\mu} b_Q \ge p_0\}) \le \frac{1}{p_0^q} \int [M^{R}_{\mu} b_Q]^q \,d\mu \lesssim \frac{1}{p_0^q} \mu(Q),
\end{align*}
and so
\begin{displaymath}
\sigma(E_{p_0}) \le \mu(E_{p_0})^{1/q'} \Big( \int_Q |b_Q|^q\,d\mu\Big)^{1/q} \lesssim \frac{1}{p_0^{q-1}} \mu(Q) \le  \frac{1}{p_0^{q-1}} \sigma(Q).
\end{displaymath}
We fix $p_0$ so large that $\sigma(E_{p_0/2^n}) \le \frac{\eta}{8}\sigma(Q)$. For $x \in \{p > p_0\}$ define
\begin{displaymath}
r(x) = \sup\{r > 0\colon \sigma(B(x,r)) > p_0r^n\},
\end{displaymath}
and then set
\begin{displaymath}
H_2 := \bigcup_{x \in \{p > p_0\}} B(x,r(x)).
\end{displaymath}
It is clear that every ball $B_r$ with $\sigma(B_r) > p_0r^n$ satisfies $B_r \subset H_2$.
Notice that if $y \in H_2$, then there is $x \in \{p > p_0\}$ so that $y \in B(x,r(x))$, and so $\sigma(B(y, 2r(x)) \ge \sigma(B(x,r(x)) \ge p_0r(x)^n = p_02^{-n}[2r(x)]^n$. We conclude that
$H_2 \subset E_{p_0/2^n}$, and so $\sigma(H_2) \le \frac{\eta}{8}\sigma(Q)$. 

We can take $c_1 = \eta/8$ on the statement of the theorem. This means
that $\sigma(E_Q) \le \frac{\eta}{8}\sigma(Q)$.
Define now $H = H_1 \cup H_2 \cup E_Q$. The properties of $H$ are as follows:
\begin{enumerate}
\item We have $\sigma(H) \le \frac{\eta}{2}\sigma(Q)$, and so
$\sigma(H \cup T_w) \le \big(1-\frac{\eta}{2}\big)\sigma(Q) = \tau_1 \sigma(Q)$, $\tau_1 < 1$.
\item If $\sigma(B_r) > p_0r^n$, then $B_r \subset H$.
\item $|b_Q(x)| \sim 1$ for $\mu$-a.e. $x \in Q \setminus H$.
\end{enumerate}
We also have for every $\lambda > 0$ that
\begin{align*}
\lambda^s & \sigma(\{x \in Q \setminus H\colon\,  T_{\sigma,*} \widehat b_Q(x) > \lambda\}) \\
&=\lambda^s \sigma(\{x \in Q \setminus H\colon\,  T_{\mu,*} b_Q(x) > \lambda\}) \\
&= \lambda^s \int_{\{x \in Q \setminus H\colon\,  T_{\mu,*} b_Q(x) > \lambda\}} |b_Q|\,d\mu \\
&\lesssim \lambda^s\mu(\{x \in Q \setminus E_Q\colon\,  T_{\mu,*} b_Q(x) > \lambda\}) \le B_2\mu(Q) \lesssim \sigma(Q).
\end{align*}
Appealing to the big pieces global $Tb$ theorem by Nazarov--Treil--Volberg (Theorem 5.1 in \cite{ToBook})
with the measure $\sigma$ and the \emph{bounded} function $\widehat b_Q$ we find $G_Q \subset Q \setminus H \subset Q \setminus E_Q$ so that
$\sigma(G_Q) \gtrsim \sigma(Q)$ and
\begin{equation}\label{eq:sigmab}
\sup_{\epsilon > 0} \|1_{G_Q} T_{\sigma, \epsilon}f \|_{L^2(\sigma)} \lesssim \|f \|_{L^2(\sigma)}
\end{equation}
for every $f \in L^2(\sigma)$ satisfying spt$\,f \subset G_Q$.

Let $\epsilon > 0$. Suppose now that $g \in L^2(\mu)$ and spt$\,g \subset G_Q$. We apply Equation \eqref{eq:sigmab} with $f = g / |b_Q|$ (since
$G_Q \subset Q \setminus H$ we have $|b_Q| \sim 1$ on the support of $g$).
Notice that
\begin{displaymath}
\|1_{G_Q} T_{\sigma, \epsilon}(g/|b_Q|) \|_{L^2(\sigma)} = \|1_{G_Q} T_{\mu, \epsilon}g \|_{L^2(\sigma)} \gtrsim \|1_{G_Q} T_{\mu, \epsilon}g \|_{L^2(\mu)}
\end{displaymath}
so that
\begin{displaymath}
\|1_{G_Q} T_{\mu, \epsilon}g \|_{L^2(\mu)} \lesssim \|g/|b_Q| \|_{L^2(\sigma)} \lesssim \|g\|_{L^2(\mu)}.
\end{displaymath}
Since $\epsilon > 0$ was arbitrary this means precisely that $T_{\mu\rest G_Q}\colon L^2(\mu\rest G_Q) \to L^2(\mu\rest G_Q)$ boundedly.
Moreover, we have that
\begin{displaymath}
\mu(Q) \le \sigma(Q) \lesssim \sigma(G_Q) = \int_{G_Q} |b_Q| \,d\mu \lesssim \mu(G_Q).
\end{displaymath}
We are done.
\end{proof}
We record as a corollary a local $Tb$ theorem with maximal truncations testing. Again, this could be improved as in \cite{MMV}, but our main focus
is the local $Tb$ theorem on the next subsection (only the previous proposition is needed for that).
\begin{cor}
Let $\mu$ be a measure of order $n$ on $\R^d$ and $T$ be an $n$-dimensional SIO with a kernel $K$ satisfying $K(x,y) = -K(y,x)$.
Suppose $q \in (1,\infty)$, and let $b$ and $t$ be large enough constants (depending only on $d$).
We assume that to every $(5,b)$-doubling cube $Q \subset \R^d$ with $t$-small boundary there is associated
an $L^q(\mu)$-admissible test function $b_Q$ in $Q$ with constant $B_1$ such that
\begin{displaymath}
\sup_{\lambda > 0} \lambda^s \mu(\{x \in Q\colon\, T_{\mu,*}b_Q(x) > \lambda\}) \le B_2\mu(Q) \textup{ for some } B_2 < \infty \textup{ and } s > 0.
\end{displaymath}
Then $T_{\mu}\colon L^2(\mu) \to L^2(\mu)$ with a bound depending on the above constants.
\end{cor}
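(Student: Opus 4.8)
The plan is to derive this from the big pieces statement of Proposition \ref{prop:TstarmainProp} together with the non-homogeneous good lambda method of \cite{MMV}.

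First I would, for each relevant cube, extract a big piece of the measure on which $T$ is already $L^2$-bounded. Fix a $(5,b)$-doubling cube $Q\subset\R^d$ with $t$-small boundary and its associated $L^q(\mu)$-admissible test function $b_Q$, and apply Proposition \ref{prop:TstarmainProp} with the empty exceptional set $E_Q=\emptyset$, which is permitted since $\int_{E_Q}|b_Q|\,d\mu=0\le c_1\int_Q|b_Q|\,d\mu$ trivially and since the weak type bound \eqref{eq:maxtrun} with $E_Q=\emptyset$ is precisely our hypothesis $\sup_{\lambda>0}\lambda^s\mu(\{x\in Q\colon T_{\mu,*}b_Q(x)>\lambda\})\le B_2\mu(Q)$. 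The Proposition then furnishes $G_Q\subset Q$ with $\mu(G_Q)\gtrsim\mu(Q)$ and $T_{\mu\rest G_Q}\colon L^2(\mu\rest G_Q)\to L^2(\mu\rest G_Q)$ with norm bounded uniformly in $Q$ in terms of $d$, $n$, $q$, $\alpha$, the Calder\'on--Zygmund constants of $K$, $B_1$, $B_2$ and $s$. I would also record, using the existence of doubling cubes recalled in Section 2 (valid since $b$ is large) together with the small boundary Lemma, that such cubes $Q$ are abundant: around every point of $\supp\mu$ there are $(5,b)$-doubling cubes with $t$-small boundary of arbitrarily large side length, so the family on which the above holds is rich enough to feed the good lambda scheme (one may also first reduce to $\mu$ of bounded support by running everything on $\mu\rest Q_0$ with $Q_0\uparrow\R^d$, the bounds being independent of $Q_0$).

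The second step is to promote this \emph{big pieces} property --- every admissible cube $Q$ carries a fixed fraction $G_Q$ of its $\mu$-mass on which $T_\mu$ is $L^2$-bounded with uniform norm --- to the global bound $T_\mu\colon L^2(\mu)\to L^2(\mu)$, and here I would invoke the good lambda method of \cite{MMV}. In the present antisymmetric Calder\'on--Zygmund setting it amounts to establishing a good-$\lambda$ type inequality comparing $T_{\mu,*}f$ with a maximal function $M_{\mu,p_0}f$, $p_0\in(1,2)$: one localizes to the Whitney cubes of $\{T_{\mu,*}f>\lambda\}$, passes inside each of them to an enclosing $(5,b)$-doubling cube $Q$ with $t$-small boundary, splits $f=f1_Q+f1_{\R^d\setminus Q}$, controls the far interaction by the kernel smoothness and the doubling of $Q$, and controls the near interaction by the $L^2(\mu\rest G_Q)$-boundedness of $T$ on the big piece $G_Q$ from the first step. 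Summing in $\lambda$ gives $\Norm{T_{\mu,*}f}{L^2(\mu)}\lesssim\Norm{M_{\mu,p_0}f}{L^2(\mu)}\lesssim\Norm{f}{L^2(\mu)}$ (the last step because $M_{\mu,p_0}$ is bounded on $L^2(\mu)$ when $p_0<2$), and since $|T_{\mu,\epsilon}f|\le T_{\mu,*}f$ pointwise this yields $T_\mu\colon L^2(\mu)\to L^2(\mu)$ with uniform bound.

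I expect the genuinely difficult point to be this second step, which is exactly why I would quote it from \cite{MMV}. The difficulty is that $Q\setminus G_Q$ has $\mu$-measure only $\le(1-\theta)\mu(Q)$ rather than zero, so on a Whitney cube the near part of $f$ is not controlled everywhere at once; one has to iterate the construction on the successive leftover sets and sum a geometric series in $\theta$, and it is in handling the cross terms between the big pieces produced at different scales that the antisymmetry $K(x,y)=-K(y,x)$ is used (this being the only role of antisymmetry here). As already remarked in the paragraph preceding Proposition \ref{prop:TstarmainProp}, the scheme of \cite{MMV} transfers to the present setting essentially without change, so I would simply cite it to conclude.
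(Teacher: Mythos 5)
Your proof is correct and follows the same two-step route as the paper: apply Proposition \ref{prop:TstarmainProp} with the empty exceptional set $E_Q=\emptyset$ to produce the big piece $G_Q$, then invoke the non-homogeneous good lambda method to upgrade to full $L^2(\mu)$ boundedness. Two small corrections to your commentary. First, the good lambda theorem you actually need is Theorem \ref{thm:thegoodlambda} together with Remark \ref{rem:goodlambda} in the appendix of this paper, which is a version of Theorem 2.22 in \cite{ToBook} adapted to the $(5,b)$-doubling cubes with $t$-small boundary on which the testing hypothesis is given; \cite{MMV} supplies the overall strategy but runs it for square functions, so it cannot be quoted verbatim for maximal truncations of a CZO. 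Second, and more importantly, you misplace the role of antisymmetry: it is \emph{not} used in the good lambda step (Theorem \ref{thm:thegoodlambda} holds for arbitrary $n$-dimensional SIO), but rather in the first step, inside Proposition \ref{prop:TstarmainProp}, because the Nazarov--Treil--Volberg big pieces global $Tb$ theorem (Theorem 5.1 in \cite{ToBook}) that is invoked there is currently only available for antisymmetric kernels. Relatedly, the genuinely hard step of the argument is that proposition, not the good lambda, which is by comparison a standard distributional comparison of $T_{\mu,*}f$ with a maximal function and does not require iterating over leftover sets of the big pieces.
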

\begin{proof}
Fix an arbitrary $(5,b)$-doubling cube $Q \subset \R^d$ with $t$-small boundary. By the good lambda method (Theorem \ref{thm:thegoodlambda} and Remark \ref{rem:goodlambda}) it is enough to show
that there exists $G_Q \subset Q$ so that $\mu(G_Q) \gtrsim \mu(Q)$ and $T_{\mu\rest G_Q}\colon L^2(\mu\rest G_Q) \to L^2(\mu\rest G_Q)$.
By Proposition \ref{prop:TstarmainProp} this follows from the assumptions.
\end{proof}
\subsection{Local $Tb$ theorem with operator testing}
\begin{thm}\label{thm:locTbop}
Let $\mu$ be a measure of order $n$ on $\R^d$ and $T$ be an $n$-dimensional SIO with a kernel $K$ satisfying $K(x,y) = -K(y,x)$.
Suppose $q \in (1,2]$, and let $b$ and $t$ be large enough constants (depending only on $d$).
We assume that to every $(5,b)$-doubling cube $Q \subset \R^d$ with $t$-small boundary there is associated
an $L^q(\mu)$-admissible test function $b_Q$ in $Q$ with constant $B_1$ such that
\begin{displaymath}
\left\{ \begin{array}{ll} 
\sup_{\delta > 0} \int_Q |T_{\mu, \delta} b_Q|^{q'}\,d\mu \lesssim \mu(Q), & \textrm{if } \frac{1}{q} < \frac{1}{2}\Big(1+ \frac{1}{nq}\Big), \\
\sup_{\delta > 0} \int_{2Q} |T_{\mu, \delta} b_Q|^{q'}\,d\mu \lesssim \mu(Q) & \textrm{otherwise }.
\end{array} \right.
\end{displaymath}
Then $T_{\mu}\colon L^2(\mu) \to L^2(\mu)$ with a bound depending on the above constants.
\end{thm}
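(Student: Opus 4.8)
The plan is to reduce the statement to machinery already available: first convert the single $L^q$ operator testing hypothesis into a weak type bound for the maximal truncation $T_{\mu,*}b_Q$ via Corollary \ref{cor:main}, then feed that bound into the big pieces construction of Proposition \ref{prop:TstarmainProp}, and finally conclude by the good lambda method exactly as in the proof of the corollary preceding this theorem. Throughout, $b$ and $t$ are taken large enough for all the results invoked.

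First I would use the antisymmetry $K(x,y)=-K(y,x)$ to dispose of the missing second family of test functions: since then $T^{*}_{\mu,\delta}=-T_{\mu,\delta}$, a single $L^q(\mu)$-admissible $b_Q$ serves as a test function for both $T$ and $T^{*}$. Concretely, I apply Corollary \ref{cor:main} with $p=q$ and $p_Q:=b_Q$. When $p=q$ both threshold conditions $\tfrac1p+\tfrac1q<1+\tfrac1{nq}$ and $\tfrac1p+\tfrac1q<1+\tfrac1{np}$ collapse to $\tfrac1q<\tfrac12\bigl(1+\tfrac1{nq}\bigr)$, which is precisely the dichotomy assumed here; and since $|T^{*}_{\mu,\delta}p_Q|^{p'}=|T_{\mu,\delta}b_Q|^{q'}$, all four testing conditions required in Corollary \ref{cor:main} reduce to the single hypothesis assumed here (the $\int_Q$ version in the unbuffered regime, the $\int_{2Q}$ version otherwise). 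Hence Corollary \ref{cor:main} applies and gives, for every $(5,b)$-doubling cube $Q$ with $t$-small boundary and every $0<a<q$,
\[
\int_{(1-\tau)Q}[T_{\mu,*}b_Q]^a\,d\mu\lesssim_{\tau,a}\mu(Q).
\]

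Next I fix one such cube $Q$ and prepare the input for Proposition \ref{prop:TstarmainProp}. Taking $a=1$ (legitimate since $q>1$) and applying Chebyshev's inequality gives $\sup_{\lambda>0}\lambda\,\mu(\{x\in(1-\tau)Q:T_{\mu,*}b_Q(x)>\lambda\})\lesssim_\tau\mu(Q)$, i.e.\ hypothesis \eqref{eq:maxtrun} holds with $s=1$ and exceptional set $E_Q:=Q\setminus(1-\tau)Q$. To use this $E_Q$ I must verify $\int_{E_Q}|b_Q|\,d\mu\le c_1\int_Q|b_Q|\,d\mu$, where $c_1=c_1(q,B_1)$ is the constant from the proposition. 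On the one hand $\int_Q|b_Q|\,d\mu\ge\bigl|\int_Q b_Q\,d\mu\bigr|=\mu(Q)$ by admissibility; on the other hand, since $Q$ is $(5,b)$-doubling with $t$-small boundary,
\[
\mu(E_Q)\le\mu\bigl(\{x\in 5Q:\dist(x,\partial Q)\le\tfrac{\tau}{2}\ell(Q)\}\bigr)\le\tfrac{t\tau}{2}\mu(5Q)\le\tfrac{tb\tau}{2}\mu(Q),
\]
so by H\"older and admissibility $\int_{E_Q}|b_Q|\,d\mu\le\|b_Q\|_{L^q(\mu)}\mu(E_Q)^{1/q'}\le B_1^{1/q}(tb\tau/2)^{1/q'}\mu(Q)$. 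Fixing $\tau$ small enough (depending only on $q,B_1,t,b$) makes this $\le c_1\mu(Q)\le c_1\int_Q|b_Q|\,d\mu$, as required. Proposition \ref{prop:TstarmainProp} then yields $G_Q\subset Q\setminus E_Q\subset Q$ with $\mu(G_Q)\gtrsim\mu(Q)$ and $T_{\mu\rest G_Q}\colon L^2(\mu\rest G_Q)\to L^2(\mu\rest G_Q)$ bounded, with constants uniform over all admissible $Q$.

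Having produced, for every $(5,b)$-doubling cube $Q$ with $t$-small boundary, a big piece $G_Q$ on which $T$ is $L^2$-bounded, I would finish by the good lambda method (Theorem \ref{thm:thegoodlambda} together with Remark \ref{rem:goodlambda}), applied exactly as in the proof of the preceding corollary, which upgrades this family of local estimates to $T_\mu\colon L^2(\mu)\to L^2(\mu)$. I expect the only genuinely delicate points to be bookkeeping rather than analysis: carefully matching the $p=q$ specialization of the two threshold conditions and four testing conditions of Corollary \ref{cor:main} with the single $L^q$ hypothesis assumed here, and the verification that the trimming shell $Q\setminus(1-\tau)Q$ carries little $|b_Q|\,d\mu$ mass, which is exactly where the $(5,b)$-doubling and $t$-small-boundary properties of $Q$ are used. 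No analytic ingredient beyond the machinery already developed above should be needed.
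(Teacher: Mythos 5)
Your proposal is correct and follows essentially the same route as the paper: reduce via the good lambda method (Theorem \ref{thm:thegoodlambda} and Remark \ref{rem:goodlambda}) to producing a big piece $G_Q$, invoke Proposition \ref{prop:TstarmainProp} with $E_Q = Q\setminus(1-\tau)Q$ whose small measure is controlled by the $t$-small boundary and doubling of $Q$, and obtain the required weak $(1,1)$ bound on $T_{\mu,*}b_Q$ from Corollary \ref{cor:main}. The only difference is that you spell out the antisymmetry reduction $p=q$, $p_Q:=b_Q$, $T^*_{\mu,\delta}=-T_{\mu,\delta}$ (which the paper leaves implicit in the phrase ``follows from Corollary \ref{cor:main}'') and you write the Chebyshev step from the $L^1$ bound to \eqref{eq:maxtrun} explicitly; both are accurate bookkeeping, not new ideas.
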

\begin{proof}
Fix an arbitrary $(5,b)$-doubling cube $Q \subset \R^d$ with $t$-small boundary. By the good lambda method (Theorem \ref{thm:thegoodlambda} and Remark \ref{rem:goodlambda}) it is enough to show
that there exists $G_Q \subset Q$ so that $\mu(G_Q) \gtrsim \mu(Q)$ and $T_{\mu\rest G_Q}\colon L^2(\mu\rest G_Q) \to L^2(\mu\rest G_Q)$.
By Proposition \ref{prop:TstarmainProp} it is enough to show that
\begin{displaymath}
\int_{Q \setminus E_Q} T_{\mu, *}b_Q\,d\mu \lesssim \mu(Q)
\end{displaymath}
for some set $E_Q \subset \R^d$ satisfying that $\int_{E_Q} |b_Q|\,d\mu \le c_1\int_Q |b_Q|\,d\mu$, where $c_1 = c_1(B_1,q) > 0$.

Let $E_Q = Q \setminus (1-\tau_0)Q$ for some $\tau_0 < 1$ large enough. Then we have, since $Q$ has $t$-small boundary and is doubling, that
\begin{displaymath}
\mu(E_Q) \le \epsilon(\tau_0) \mu(Q),
\end{displaymath}
where $\lim_{\tau_0 \to 1} \epsilon(\tau_0) = 0$. In particular, we have that
\begin{align*}
\int_{E_Q} |b_Q|\,d\mu \le \mu(E_Q)^{1/q'} \|b_Q\|_{L^q(\mu)} & \le \epsilon(\tau_0)^{1/q'}B_1^{1/q} \mu(Q) \\ &= \epsilon(\tau_0)^{1/q'}B_1^{1/q} \int_Q b_Q\,d\mu \\
& \le \epsilon(\tau_0)^{1/q'}B_1^{1/q} \int_Q |b_Q|\,d\mu
\le c_1   \int_Q |b_Q|\,d\mu
\end{align*}
provided that $\tau_0 = \tau_0(B_1, q) < 1$ is fixed close enough to $1$.

Now the estimate
\begin{displaymath}
\int_{Q \setminus E_Q} T_{\mu, *}b_Q\,d\mu = \int_{(1-\tau_0)Q} T_{\mu, *}b_Q\,d\mu \lesssim \mu(Q)
\end{displaymath}
follows from Corollary \ref{cor:main}, and we are done.
\end{proof}
\appendix
\section{Good lambda method with small boundaries}
We prove a version of Theorem 2.22 from \cite{ToBook}, which is weaker in the sense that we require only cubes with small boundaries.
\begin{thm} \label{thm:thegoodlambda}
Let $\mu$ be a Radon measure on $\R^d$ of degree $n$ and $T$ be an $n$-dimensional SIO. Let $b>0$ and $C_1$ be 
big enough (depending only on $d$) and let $\theta>0$.
Suppose that for every $(5,b)$-doubling cube $Q$ with $C_1$-small boundary there exists some subset $G_Q\subset Q$, with
$\mu(G_Q)\geq \theta\,\mu(Q)$, such that $T_*$ is bounded from $M(\R^d)$ to $L^{1,\infty}(\mu\rest G_Q)$, with norm
bounded uniformly on $Q$. Then $T_\mu$ is bounded in $L^p(\mu)$, for $1<p<\infty$, with its norm depending on $p$ and on the
preceding constants.
\end{thm}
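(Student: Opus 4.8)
The strategy is to reduce to a good-$\lambda$ inequality for the truncated maximal operator $T_{\mu,*,\delta}$, with comparison function the centred maximal function $M_\mu$, and then integrate it against $p\lambda^{p-1}\,d\lambda$; the role of the ``big pieces'' $G_Q$ is to provide the crucial local estimate. Since $|T_{\mu,\epsilon}f|\le T_{\mu,*}f$ pointwise, it suffices to show $\|T_{\mu,*}f\|_{L^p(\mu)}\lesssim_p\|f\|_{L^p(\mu)}$ for $f\in L^p(\mu)$. By a routine reduction we may assume $\mu$ is finite and compactly supported and $f$ is bounded with compact support: restricting $\mu$ to a large ball $B(0,R)$ does not change $T_{\mu,\epsilon}f$ on $\supp(f\mu)$, and by the growth condition $\int_{|x|>R}|T_{\mu,\epsilon}f|^p\,d\mu\lesssim\|f\|_{L^1(\mu)}^p\,R^{n(1-p)}\to 0$ because $p>1$; moreover the hypothesis for $\mu$ passes to $\mu\rest B(0,R)$ for the cubes that occur below, which are well inside $B(0,R)$. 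Fix $\delta>0$; since $\mu$ is finite, $T_{\mu,*,\delta}f$ is a bounded function, hence lies in $L^p(\mu)$ \emph{a priori}, and we shall prove $\|T_{\mu,*,\delta}f\|_{L^p(\mu)}\lesssim_p\|f\|_{L^p(\mu)}$ with a constant independent of $\delta$, after which $\delta\downarrow 0$ and delocalisation finish the proof.

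Fix $\lambda>0$, let $U_\lambda=\{T_{\mu,*,\delta}f>\lambda\}$ (open, of finite $\mu$-measure), and fix small parameters $\kappa,\gamma>0$ with $\gamma\ll\kappa$, to be chosen at the end. The key claim is
\[
\mu\big(\{T_{\mu,*,\delta}f>(1+\kappa)\lambda\}\cap\{M_\mu f\le\gamma\lambda\}\big)\ \le\ \Big(1-\theta+C\tfrac{\gamma}{\kappa}\Big)\,\mu(U_\lambda).
\]
Its proof rests on covering $U_\lambda$, up to a $\mu$-negligible set, by a family of $(5,b)$-doubling cubes with $C_1$-small boundary that are \emph{Whitney-like} for $U_\lambda$ --- i.e.\ $\ell(R_i)\sim\dist(R_i,U_\lambda^c)$ --- with bounded overlap and $\sum_i\mu(R_i)\lesssim\mu(U_\lambda)$. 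Producing such a family is \textbf{the main technical point}: an ordinary Whitney cube need not be $(5,b)$-doubling, and $(5,b)$-doubling cubes are only guaranteed with a prescribed \emph{lower} bound on their side length. One therefore runs up the chain $Q(x,5^jd(x))$, $d(x):=\dist(x,U_\lambda^c)$, $0\le j\le K$ for a fixed large $K$, stops at the first $(5,b)$-doubling cube (Whitney-like with constants depending on $K$), applies the small-boundary lemma (Lemma~9.43 of \cite{ToBook}) to adjust it slightly so as to acquire $C_1$-small boundary while staying doubling (here $b$ is taken large enough), and relegates to an exceptional set those $x$ for which no cube in the chain is doubling; for such $x$ the growth condition forces $\mu(Q(x,d(x)))<C_0(5^n/b)^{K}d(x)^n$, and a Besicovitch covering argument together with the choice of $K$ large shows that this exceptional set is negligible. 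This is exactly the scheme behind Theorem~2.22 of \cite{ToBook}.

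On a fixed cube $R=R_i$ of the family, let $G_R\subset R$ be given by the hypothesis, so $\mu(G_R)\ge\theta\mu(R)$ and $T_*\colon M(\R^d)\to L^{1,\infty}(\mu\rest G_R)$ with uniform norm. If $M_\mu f>\gamma\lambda$ on all of $R$ the corresponding term is empty; otherwise fix $x_R\in R$ with $M_\mu f(x_R)\le\gamma\lambda$ and $\bar x_R\in U_\lambda^c$ with $\dist(\bar x_R,R)\lesssim\ell(R)$. Write $f=f_1+f_2$ with $f_1=f\,1_{cR}$ for a large fixed dilation factor $c$. The standard Cotlar-type comparison of non-homogeneous Calder\'on--Zygmund theory --- comparing $T_{\mu,\epsilon}f_2(x)$ for $x\in R$, $\epsilon>\delta$, with a value $|T_{\mu,\epsilon'}f(\bar x_R)|\le\lambda$, using the regularity of $K$, the doubling of $R$, the bound $M_\mu f(x_R)\le\gamma\lambda$, and the $C_1$-small boundary of $R$ to absorb the contributions of the annuli near $\partial R$ and near $\partial(cR)$ --- gives $T_{\mu,*,\delta}f_2(x)\le(1+C\gamma)\lambda$ for $\mu$-a.e.\ $x\in R$. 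For the local part, $\|f_1\mu\|=\int_{cR}|f|\,d\mu\le\mu(B(x_R,C\ell(R)))\,M_\mu f(x_R)\lesssim\gamma\lambda\,\mu(R)$, the last step using the doubling of $R$; hence, applying the weak-$(1,1)$ bound on $G_R$ to the measure $f_1\mu$,
\[
\mu\big(\{x\in G_R:\,T_{\mu,*,\delta}f_1(x)>\tfrac{\kappa}{2}\lambda\}\big)\ \le\ (\mu\rest G_R)\big(\{T_*(f_1\mu)>\tfrac{\kappa}{2}\lambda\}\big)\ \lesssim\ \frac{\|f_1\mu\|}{\kappa\lambda}\ \lesssim\ \frac{\gamma}{\kappa}\,\mu(R).
\]
Choosing $\gamma\le\kappa$ so small that $C\gamma\lambda\le\tfrac{\kappa}{2}\lambda$, and using on $\{M_\mu f\le\gamma\lambda\}$ the inclusion $R\cap\{T_{\mu,*,\delta}f>(1+\kappa)\lambda\}\subset(R\setminus G_R)\cup\{x\in G_R:T_{\mu,*,\delta}f_1(x)>\tfrac{\kappa}{2}\lambda\}$, the $\mu$-mass of this piece of $R$ is at most $(1-\theta+C\gamma/\kappa)\mu(R)$; summing over $i$ (and using $\sum_i\mu(R_i)\lesssim\mu(U_\lambda)$) yields the key claim.

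Finally set $\beta=1-\theta+C\gamma/\kappa$. Choose $\kappa$ so small that $(1+\kappa)^p(1-\tfrac{\theta}{2})<1$, then $\gamma$ so small that $C\gamma/\kappa<\tfrac{\theta}{2}$, so that $\beta(1+\kappa)^p<1$. Since $\{T_{\mu,*,\delta}f>(1+\kappa)\lambda\}\subset\big(\{T_{\mu,*,\delta}f>(1+\kappa)\lambda\}\cap\{M_\mu f\le\gamma\lambda\}\big)\cup\{M_\mu f>\gamma\lambda\}$, integrating the key claim against $p\lambda^{p-1}\,d\lambda$ gives
\[
(1+\kappa)^{-p}\|T_{\mu,*,\delta}f\|_{L^p(\mu)}^p\ \le\ \beta\,\|T_{\mu,*,\delta}f\|_{L^p(\mu)}^p\ +\ \gamma^{-p}\|M_\mu f\|_{L^p(\mu)}^p.
\]
Because $\beta(1+\kappa)^p<1$ and $\|T_{\mu,*,\delta}f\|_{L^p(\mu)}<\infty$ \emph{a priori}, the first term on the right may be absorbed; using that the centred maximal function $M_\mu$ is bounded on $L^p(\mu)$ for $p>1$ (Besicovitch), we obtain $\|T_{\mu,*,\delta}f\|_{L^p(\mu)}\lesssim_p\|f\|_{L^p(\mu)}$ uniformly in $\delta$. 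Letting $\delta\downarrow 0$ by monotone convergence, removing the localisation, and using density of compactly supported bounded functions completes the proof. The delicate part throughout is the covering step --- producing Whitney-scale doubling cubes with small boundary and controlling the exceptional set where they are unavailable --- the Cotlar-type comparison and the good-$\lambda$ bookkeeping being, by comparison, routine once that is in place.
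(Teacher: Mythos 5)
Your overall scheme --- a good-$\lambda$ inequality for $T_{\mu,*,\delta}$ against $M_\mu$, a Whitney-scale cover by doubling cubes with $C_1$-small boundary, the Cotlar comparison plus the weak-$(1,1)$ bound on $G_R$, then integration against $p\lambda^{p-1}\,d\lambda$ and absorption --- is the right one and coincides with what the paper invokes via Theorem~2.22 of \cite{ToBook} and Lemma~\ref{lem:whitney}; the bookkeeping in your later paragraphs is sound. But the step you yourself flag as the main technical point has a genuine gap. You propose, for each $x\in U_\lambda$, to climb the chain $Q(x,5^jd(x))$, $0\le j\le K$, stop at the first $(5,b)$-doubling cube, and push into an exceptional set $E_K$ those $x$ for which no cube in the chain is doubling, claiming Besicovitch plus large $K$ makes $E_K$ negligible. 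What you actually get is $\mu(Q(x,d(x)))\le C_0(5^n/b)^Kd(x)^n$ for $x\in E_K$, hence after Besicovitch $\mu(E_K)\lesssim(5^n/b)^K\sum_i d(x_i)^n$, and you never control $\sum_i d(x_i)^n$: bounded overlap of the $Q(x_i,d(x_i))\subset U_\lambda$ bounds $\sum_i d(x_i)^d$ by the Lebesgue measure of $U_\lambda$, which neither controls $\sum_i d(x_i)^n$ for $n<d$ nor bears on $\mu(U_\lambda)$. Nor can you let $K\to\infty$: the value of $K$ past which a given $x$ leaves $E_K$ depends on the lower density $\mu(Q(x,d(x)))/d(x)^n$, which has no uniform lower bound, while the good-$\lambda$ inequality needs constants independent of $\lambda$. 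So the stated factor $1-\theta+C\gamma/\kappa$ in your key claim is unjustified.

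The paper avoids this entirely in Lemma~\ref{lem:whitney}: take the \emph{dyadic} Whitney decomposition $\Omega=\bigcup_iQ_i$ with bounded overlap $\sum_i1_{10Q_i}\le D_01_\Omega$, and note that any $Q_i$ which is not $(10,2D_0)$-doubling satisfies $\mu(Q_i)<\tfrac{1}{2D_0}\mu(10Q_i)$; summing and using the overlap bound shows the non-doubling Whitney cubes carry at most half of $\mu(\Omega)$ --- no exceptional set, no chain, no parameter $K$. One then thins to a pairwise-disjoint subfamily by the triple-cube covering lemma and inflates each survivor to a concentric $\wt Q_j\subset1.1Q_j$ with $C_1$-small boundary via Lemma~9.43 of \cite{ToBook}, obtaining $(9,2D_0)$-doubling, small-boundary cubes carrying $\ge\tfrac{1}{8D_0}\mu(\Omega)$. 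The resulting good-$\lambda$ constant is then $1-c\theta+C\gamma/\kappa$ with $c\sim 1/D_0$ rather than $1-\theta+C\gamma/\kappa$; this is harmless for the conclusion, and is what Theorem~2.22 of \cite{ToBook} actually delivers once the cubes $Q_i$, $i\in S$, are replaced by the $\wt Q_j$. Replace your chain/Besicovitch covering step with this dyadic half-measure selection and the argument closes.
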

\begin{rem}\label{rem:goodlambda}
One can also assume that $T_{\mu\rest G_Q}\colon L^2(\mu\rest G_Q) \to L^2(\mu\rest G_Q)$ with norm bounded uniformly on $Q$, since
then $T_*$ is bounded from $M(\R^d)$ to $L^{1,\infty}(\mu\rest G_Q)$ by standard results (see e.g. Theorem 2.21 in \cite{ToBook}).
\end{rem}

To prove Theorem \ref{thm:thegoodlambda} we will use a Whitney's decomposition of some open set. In the next lemma
we show the precise version of the required decomposition.
\begin{lem} \label{lem:whitney}
If $\Omega\subset\R^d$ is open, $\Omega\neq\R^d$, then $\Omega$
can be decomposed as
$$\Omega = \bigcup_{i\in I} Q_i, $$
where $Q_i$, $i\in I$, are closed dyadic cubes with disjoint interiors such
that for some constants $R>20$ and $D_0\geq1$ depending only on $d$ the following holds:
\begin{itemize}
\item[(i)] $10Q_i \subset \Omega$ for each $i\in I$.
\item[(ii)] $R Q_i \cap \Omega^{c} \neq \varnothing$ for each $i\in I$.
\item[(iii)] For each cube $Q_i$, there are at most $D_0$ cubes $Q_j$
such that $10Q_i \cap 10Q_j \neq \varnothing$. Further, for such cubes $Q_i$, $Q_j$, we have $\ell(Q_i)\approx
\ell(Q_j)$.
\end{itemize}
Moreover, if $\mu$ is a positive Radon measure on $\R^d$ and
$\mu(\Omega)<\infty$, there is a family of cubes $\{\wt Q_j\}_{j\in S}$, with $S\subset I$, so that
$Q_j\subset \wt Q_j\subset 1.1 Q_j$, satisfying the following:
\begin{itemize}
\item[(a)] Each cube $\wt Q_j$, $j\in S$, is $(9,2D_0)$-doubling and has $C_1$-small boundary.
\item[(b)] The cubes $\wt Q_j$, $j\in S$, are pairwise disjoint.
\item[(c)]
\begin{equation} \label{bqht22}
\mu\biggl( \,\bigcup_{j\in S} \wt Q_j \biggr) \geq \frac1{8D_0}\,
\mu(\Omega).
\end{equation}
\end{itemize}
\end{lem}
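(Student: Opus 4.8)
The plan is to build the cubes $\{Q_i\}_{i\in I}$ by the standard dyadic Whitney construction and then to peel off $\{\wt Q_j\}_{j\in S}$ by arranging the three requirements (a)--(c) one at a time: first doubling, then small boundary, then disjointness.

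Fix a large dimensional constant $\Lambda=\Lambda(d)$ (e.g.\ $\Lambda=100\sqrt d$) and let $\{Q_i\}_{i\in I}$ be the maximal dyadic cubes $Q$ with $\Lambda Q\subset\Omega$. These exist (for fixed $x\in\Omega$ the dyadic cubes $Q\ni x$ with $\Lambda Q\subset\Omega$ have side lengths bounded in terms of $\dist(x,\Omega^c)<\infty$), have pairwise disjoint interiors by maximality, and cover $\Omega$ since each $x\in\Omega$ lies in a tiny dyadic cube whose $\Lambda$-dilate is still in the open set $\Omega$. Item (i) is then immediate from $10Q_i\subset\Lambda Q_i\subset\Omega$; (ii) holds with $R=2\Lambda+1>20$ because the dyadic parent $Q_i^{(1)}$ has $\Lambda Q_i^{(1)}\not\subset\Omega$ and $\Lambda Q_i^{(1)}\subset(2\Lambda+1)Q_i$; and (iii) rests on the elementary bound $\dist(x,\Omega^c)\sim_d\ell(Q_i)$ for $x\in Q_i$ (lower bound $\gtrsim\Lambda\ell(Q_i)$ from $\Lambda Q_i\subset\Omega$, upper bound $\lesssim_d\Lambda\ell(Q_i)$ from (ii)): comparing it at the centres of two cubes with $10Q_i\cap10Q_j\ne\emptyset$ forces $\ell(Q_i)\sim_d\ell(Q_j)$, and a packing count then bounds the number of such $j$ by a dimensional constant $D_0$. (Large $\Lambda$ is needed here precisely so that the big dilates $10Q_i$ still have bounded overlap.)

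Now suppose $\mu(\Omega)<\infty$. From (iii) we get $\sum_i 1_{10Q_i}\le D_0$, hence $\sum_i\mu(10Q_i)\le D_0\mu(\Omega)$ (using $10Q_i\subset\Omega$), while $\bigcup_iQ_i=\Omega$ gives $\sum_i\mu(Q_i)\ge\mu(\Omega)$. Call an index $i$ \emph{good} if $\mu(10Q_i)\le 2D_0\,\mu(Q_i)$; by Chebyshev, $\sum_{i\ \textup{bad}}\mu(Q_i)\le(2D_0)^{-1}\sum_i\mu(10Q_i)\le\tfrac12\mu(\Omega)$, so $\sum_{i\ \textup{good}}\mu(Q_i)\ge\tfrac12\mu(\Omega)$. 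For each good $i$, apply the small boundary lemma of Section 2 with $\mu_1=\mu_2=\mu$ and $t=C_1$ to get a concentric $\wt Q_i$ with $Q_i\subset\wt Q_i\subset1.1Q_i$ and $C_1$-small boundary; since $9\wt Q_i\subset10Q_i$ and $\mu(\wt Q_i)\ge\mu(Q_i)$, goodness yields $\mu(9\wt Q_i)\le2D_0\mu(\wt Q_i)$, i.e.\ $\wt Q_i$ is $(9,2D_0)$-doubling. This provides (a) for every good $i$.

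Finally, to secure the pairwise disjoint subfamily of (b)--(c) without throwing away too much mass, colour the Whitney cubes: by (iii) the graph on $I$ with an edge between $i$ and $j$ whenever $10Q_i\cap10Q_j\ne\emptyset$ has maximum degree $\le D_0-1$, so it is properly $D_0$-colourable, $I=\bigsqcup_{\ell=1}^{D_0}I_\ell$, and in each colour class the $10Q_i$, hence the $\wt Q_i\subset1.1Q_i$, are pairwise disjoint. As $\sum_{\ell=1}^{D_0}\sum_{i\in I_\ell\ \textup{good}}\mu(Q_i)=\sum_{i\ \textup{good}}\mu(Q_i)\ge\tfrac12\mu(\Omega)$, some class $\ell_0$ satisfies $\sum_{i\in I_{\ell_0}\ \textup{good}}\mu(Q_i)\ge(2D_0)^{-1}\mu(\Omega)$, and $S:=\{i\in I_{\ell_0}:i\ \textup{good}\}$ does the job:
\[
\mu\Bigl(\bigcup_{j\in S}\wt Q_j\Bigr)=\sum_{j\in S}\mu(\wt Q_j)\ge\sum_{j\in S}\mu(Q_j)\ge\frac1{2D_0}\mu(\Omega)\ge\frac1{8D_0}\mu(\Omega).
\]
I do not expect a genuine obstacle; the one fiddly point is keeping the various dimensional constants ($\Lambda$, $R$, $D_0$, the doubling factor $2D_0$, the number of colours) mutually compatible --- in particular choosing $\Lambda$ large enough for the bounded-overlap estimate in (iii) while still delivering doubling cubes with exactly the constant $2D_0$.
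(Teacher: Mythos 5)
Your proof is correct, and it departs from the paper's in one genuine way: the method for extracting the pairwise disjoint subfamily while retaining a fixed fraction of the mass. The paper first passes to a \emph{finite} subcollection of the $(10,2D_0)$-doubling Whitney cubes (retaining $\geq\tfrac14\mu(\Omega)$) and then invokes the non-homogeneous Vitali covering lemma with triple cubes (Theorem 2.1 in \cite{ToBook}) to find $S$ with $\{2Q_j\}_{j\in S}$ disjoint and $\bigcup 2Q_j\subset\bigcup_S 6Q_j$; disjointness of the $\wt Q_j\subset 1.1Q_j\subset 2Q_j$ follows, and a chain of estimates through $\mu(6Q_j)\le 2D_0\mu(Q_j)$ yields the constant $1/(8D_0)$. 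You instead exploit the bounded-overlap property (iii) directly: the intersection graph of the dilated Whitney cubes has bounded degree, so it is $D_0$-colourable, and a pigeonhole over the colour classes selects a disjoint subfamily carrying mass $\geq\tfrac{1}{2D_0}\mu(\Omega)$, which is in fact a slightly better constant than the paper's. Your route avoids the $3r$-covering lemma entirely (and the detour through a finite subcollection, since greedy colouring of a countable bounded-degree graph needs no compactness), at the small cost of introducing the colouring formalism. Everything else --- the Whitney decomposition, the Chebyshev argument showing the non-doubling cubes carry at most half the mass, passing from $Q_i$ to a concentric $\wt Q_i$ with $C_1$-small boundary via the small-boundary lemma, and the observation $9\wt Q_i\subset 10Q_i$ giving the $(9,2D_0)$-doubling --- matches the paper's argument essentially step for step.
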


\begin{proof} Whitney's decomposition into dyadic cubes satisfying (i), (ii)
and (iii) is a well known result. 

To prove the existence of the family of $\{\wt Q_j\}_{j\in S}$, 
we denote by $ I_{db}\subset I$ the subfamily of the indices such that the cubes from 
$\{Q_i\}_{i\in I_{db}}$ are $(10,2D_0)$-doubling. Then notice that 
$$\mu(Q_j)< \frac1{2D_0} \mu(10Q_j)\qquad\mbox{if $j\in I\setminus I_{db}$}.$$
Since 
$$\sum_{j\in I}1_{10Q_j}\leq D_01_\Omega,$$
we deduce that
$$\sum_{j\in I\setminus I_{db}} \mu(Q_j) \leq \frac1{2D_0} \sum_{j\in I}\mu(10Q_j)\leq \frac12\mu(\Omega).$$
Thus, 
\begin{equation*}
\mu\biggl( \,\bigcup_{j\in I_{db}} Q_j \biggr) \ge \mu(\Omega) - \sum_{j\in I\setminus I_{db}} \mu(Q_j) \ge  \frac12\,
\mu(\Omega),
\end{equation*}
and we can choose a finite subcollection $I_{db}^1 \subset I_{db}$ so that 
\begin{equation}\label{eqdjy33}
\mu\biggl( \,\bigcup_{j\in I_{db}^1} Q_j \biggr) \geq \frac14\,
\mu(\Omega).
\end{equation}

By the covering lemma with triple cubes (see e.g. Theorem 2.1 in \cite{ToBook}), there exists a subfamily $S\subset I_{db}^1$ such that the cubes
 $\{2Q_j\}_{j\in S} $ are pairwise disjoint, and 
 $$\bigcup_{j\in I_{db}^1} Q_j\subset \bigcup_{j\in I_{db}^1} 2Q_j \subset \bigcup_{j\in S} 6Q_j.$$
For each $j\in S$, we consider a cube $\wt Q_j$ with
$Q_j\subset \wt Q_j \subset 1.1Q_j$ with a $C_1$-small boundary. Such a cube exists e.g. by Lemma 9.43 in \cite{ToBook}.

Clearly, the cubes $\wt Q_j$, $j\in S$,
 are pairwise disjoint by
construction. Further,
$$\mu(9\wt Q_j)\leq \mu(10 Q_j)\leq 2D_0\,\mu(Q_j)\leq 2D_0\,\mu(\wt Q_j).$$
This means that the cubes are $(9, 2D_0)$-doubling as claimed.
The proof of (c) is also easy, using (\ref{eqdjy33}) and the doubling doubling property of the cubes 
$\{Q_j\}_{j\in S}$:
\begin{align*}
\mu(\Omega)&\leq  4\, \mu\biggl( \,\bigcup_{j\in I_{db}^1} Q_j \biggr)
\leq 4 \,\mu\biggl( \,\bigcup_{j\in S} 6Q_j \biggr) \\
& \leq 4 \sum_{j\in S} \mu(6Q_j)
\leq 8D_0\,\sum_{j\in S} \mu(Q_j) \leq 8D_0\,\sum_{j\in S} \mu(\wt Q_j) = 8D_0 \mu\Big( \bigcup_{j \in S} \wt Q_j\Big).
\end{align*}
\end{proof}

\begin{proof}[Proof of Theorem \ref{thm:thegoodlambda}]
To prove the theorem we just have  to adapt the arguments in Theorem 2.22 from
\cite{ToBook} with very minor changes. Indeed, almost all changes reduce to replacing the cubes $Q_i$, $i\in S$, in the proof
of Theorem 2.22 from \cite{ToBook} by the cubes $\wt Q_i$, $i\in S$, from Lemma \ref{lem:whitney}
(with $\Omega\equiv\Omega_\lambda$),
and to replace the sum 
$\sum_{i\in I\setminus S}\mu(Q_i)$ appearing in various places of that proof by 
$$\mu\biggl(\Omega_\lambda\setminus \bigcup_{i\in S} \wt Q_i\biggr).$$
The details are omitted.
\end{proof}

\end{document}